\documentclass[11pt]{article}

\usepackage[margin=1.05in]{geometry}
\usepackage{amsmath,amssymb,amsthm,mathtools}
\usepackage{booktabs}
\usepackage{microtype}
\usepackage[round,authoryear]{natbib}
\usepackage{xcolor}
\usepackage[colorlinks=true,linkcolor=blue!55!black,citecolor=blue!55!black]{hyperref}

\newtheorem{theorem}{Theorem}[section]
\newtheorem{lemma}[theorem]{Lemma}
\newtheorem{proposition}[theorem]{Proposition}

\newcommand{\lam}{\lambda}
\newcommand{\E}{\mathbb E}
\renewcommand{\P}{\mathbb P}
\newcommand{\1}{\mathbf 1}
\newcommand{\F}{\mathcal F}
\newcommand{\A}{\mathfrak A}
\newcommand{\cS}{\mathcal S}
\newcommand{\To}{T_{\mathrm o}}
\newcommand{\st}{\tau}
\newcommand{\norm}[1]{\lVert #1\rVert}
\newcommand{\dd}{\mathop{}\!\mathrm{d}}
\newcommand{\e}{e}
\newcommand{\R}{\mathbb R}

\title{\LARGE Weak-Type Bounds for Convolution on the Boolean Hypercube}
\author{
      Junwei Lu\thanks{Department of Biostatistics, Harvard T.H. Chan School of
	Public Health. Email: \texttt{junweilu@hsph.harvard.edu}.}
      \qquad
	Shengtao Guo
      \qquad
	Ethan X. Fang
}
\date{}

\begin{document}
\maketitle

\begin{abstract}
      Let $G$ be the Boolean hypercube which carries uniform measure $\lambda$, and let
      $T_\mu$ denote convolution by a finite positive measure $\mu$ on $G$.  For
      $
       \psi_\mu(u)=\sup\{u\lambda(\{T_\mu f\geq u\}):f\geq0,
       \norm f_1=1\},
      $
      we prove Talagrand's convolution conjecture \citep{Talagrand1989}: if
      $\mu_a=((1+a)\delta_1/2+(1-a)\delta_{-1}/2)^{\otimes n}$ and $0<a<1$,
      then $\psi_{\mu_a}(u)\leq C_a/\sqrt{\log u}$ for every $u > 1$ and $n\geq1$, where
      $C_a$ depends only on $a$.  The proof utilizes the reverse-heat and Boolean-bridge
framework of \citet{Chen2026} and the localized terminal-discrepancy method of
\citet{XiangZhang2026}. We introduce a new power coupling:  each reverse edge ratio is split into two geometric powers.  This
choice produces a switched exponential weight which restores the exact
reverse jump rate of the perturbed coordinate.  The resulting endpoint
comparison yields an anti-concentration profile estimate 
without the iterated-logarithmic factor. The proof was discovered by the Odin Automatic AI Research Agent. 
\end{abstract}

\section{Introduction}\label{sec:introduction}

Let $G=\{-1,1\}^{n}$, equipped with uniform measure $\lam$.  For a finite positive measure $\mu$ on $G$,
define
\[
 T_\mu f(x)=\int_G f(x\odot y)\,\dd\mu(y),
\]
where $x\odot y$ denotes coordinatewise multiplication.  In his study of
$L^1$ regularization by singular convolution, \citet{Talagrand1989} introduced
\[
 \psi_\mu(u)=\sup\left\{u\lam(\{T_\mu f\geq u\}):f\geq0,
                              \ \norm f_1=1\right\}
\]
and considered the biased-coin product measure
\[
 \mu_a=\left(\frac{1+a}{2}\delta_1+
                  \frac{1-a}{2}\delta_{-1}\right)^{\otimes n},
 \text{ for } 0<a<1.
\]
Talagrand conjectured that $\psi_{\mu_a}(u)\leq C_a/\sqrt{\log u}$ for every
$u\geq2$ (see \citet[Problem~2]{Talagrand1989} and \citet[Conjecture 6]{Talagrand2016}). The Gaussian analogue was first studied by \citet{ball2013l1}. \citet{EldanLee2018} established the bound up to
an iterated-logarithmic loss  and \citet{Lehec2016} improved the result in its optimal form.

The first dimension-free decay estimate for a fixed biased coin was proved by
\citet{Chen2026}.  Chen obtained a bound of order
$(1-a)^{-2}(\log\log u)^{3/2}/\sqrt{\log u}$.  His work
introduced the perturbed reverse-heat coupling, the stopped score-energy
estimate, the Boolean heat bridge, the compensated exponential processes, and
the time-smoothed anti-concentration profile.  The coupling of
\citet{Chen2026} uses a parameter
$\alpha\asymp\log\log u$.  It separates two stopping levels and also sets the
strength of the rate perturbation.  The separation must grow in order to make
exceptional paths rare.  The same growth then enters the discrepancy between
the two terminal laws.  By retaining the starting information layer throughout
the Duhamel and bridge-energy calculation, \citet{XiangZhang2026} reduced the
extra factor to $\log\log u$.  Their improvement isolates the terminal
comparison more efficiently, but it keeps the perturbation and the
approximate-monotonicity mechanism of \citet{Chen2026}; the factor
$\alpha$ therefore remains in each layerwise discrepancy.

In this paper, we remove the extra $\log\log u$ factor and prove the following weak-type bound.

\begin{theorem}\label{thm:main}
For every $0<a<1$, $u > 1$, and $n\geq1$, we have
\begin{equation}\label{eq:main-theorem}
 \psi_{\mu_a}(u)\lesssim
 \kappa_a^2\left(\frac{\log\kappa_a}{\kappa_a-1}\right)^{1/2}
  \frac1{\sqrt{\log u}}, \text{ where } \kappa_a=\frac{1+a}{1-a}.
\end{equation}
\end{theorem}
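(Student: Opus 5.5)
We follow the reverse-heat framework of \citet{Chen2026}: reduce the weak-type bound to an anti-concentration estimate for the terminal value of a reverse-time Markov process, and prove that estimate with a new coupling. The plan has four steps.

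\textbf{Step 1 (reduction).} Since $T_{\mu_a}$ is the biased noise operator, and $\psi$ is monotone under composition with Markov operators up to constants, it suffices to bound
\[
 u\,\lam(\{P_t f\geq u\})
\]
at a time $t$ with $\e^{-t}\asymp a$. Here $P_t$ is the symmetric heat semigroup on $G$, and the bias is absorbed into a reweighting by $\kappa_a$. Fix $f\geq0$ with $\int f\,\dd\lam=1$ and set $F=f\,\lam$. We run the reverse-heat (Föllmer-type) process $X_s$ on $G$ from $\lam$ to $F$. In this process coordinate $i$ flips at rate $r_i(s,x)=P_{t-s}f(x^{\oplus i})/P_{t-s}f(x)$. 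Then $M_s=\log P_{t-s}f(X_s)$ is a submartingale-type quantity with compensator controlled by the score energy $\sum_i (r_i-1)^2$.

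\textbf{Step 2 (level sets).} The event $\{P_tf\geq u\}$ corresponds to paths with $M_0$ large relative to $M_t$. As in \citet{Chen2026}, the bound becomes a time-smoothed profile estimate: the mass of $\log f(X_t)$ in a window $[\log u,\log u+1]$ is $\lesssim C_a/\sqrt{\log u}$. We stop the process at the first time the energy exceeds a level $\asymp\log u$. The stopped score-energy estimate controls the exceptional set.

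\textbf{Step 3 (power coupling, main obstacle).} To prove anti-concentration we couple $X$ with a perturbed process $\tilde X$. In $\tilde X$ a randomly chosen coordinate has reverse edge ratio $r_i$ written as $r_i=r_i^{\theta}\,r_i^{1-\theta}$, and one geometric power is absorbed into an exponential weight $\exp\bigl(\theta\sum\log r_i\bigr)$. The weight is switched at the jump times of the perturbed coordinate. This choice makes the weighted process have exactly the reverse jump rate of the original coordinate, so the compensated exponential process is a true martingale. There is then no approximate-monotonicity error and no parameter $\alpha\asymp\log\log u$.

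The endpoint comparison then says that $\log f(\tilde X_t)-\log f(X_t)$ has a spread of order $\sqrt{\log u}$. At the same time, the total variation between the two terminal laws is bounded by a layerwise Duhamel/bridge-energy sum, as in \citet{XiangZhang2026}, which now carries no $\alpha$ factor. The hardest step is verifying that the switched weight gives an \emph{exact} rate identity and keeping its second moment bounded by $\kappa_a^{O(1)}$. We will attempt this first with $\theta=1/2$, using the Boolean heat bridge to compute the conditional law given the endpoints.

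\textbf{Step 4 (constants).} A shift-versus-spread argument turns the coupling into the window bound $\lesssim 1/\sqrt{\log u}$. Tracking constants, the bias enters through the reweighting $\kappa_a^2$ and through the optimal geometric split. The split contributes $\bigl(\log\kappa_a/(\kappa_a-1)\bigr)^{1/2}$, which is the entropy cost of a two-point tilt with ratio $\kappa_a$. Together these give the bound \eqref{eq:main-theorem}. The range $1<u<\e^{C}$ is trivial, since $\psi_\mu(u)\leq\mu(G)=1$.
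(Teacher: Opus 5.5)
Your outline names the right ingredients, but the mechanism that removes the $\log\log u$ factor is missing, and the concrete choices you make would fail.

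\textbf{The split exponent.} The decisive point is its size. You fix it (trying $1/2$) and stop when the score energy reaches order $\log u$. The Duhamel/bridge argument controls the terminal discrepancy only through the weighted energy $\cS_A=\E[\1_A\bar\delta^2\int_\theta^{\st}\sum_iS_i^2\,\dd t]$ (Lemma~\ref{lem:discrepancy-summary}). With an exponent of order one, your discrepancy bound is therefore of order $\sqrt{\log u}$, which is vacuous.

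In the paper the exponent is frozen at the start time $\theta$ (the paper's $\theta$ is a time, not your exponent) as $\bar\delta=\alpha/(R_\theta+1)$. Here $R_\theta=[\ell-F_\theta(V_\theta)]_+$ is the distance to the target band and $\alpha=5$ is a fixed constant. The coupling is active only on $\{R_\theta\geq2\alpha\}$, and the stopping time is the hitting time of $\{F_t(V_t)\geq\ell+1\}$, not an energy level. This balances the two sides:
\begin{itemize}
\item On the event that $F_{\To}(W_{\To})\in(\ell+j,\ell+j+1]$ while $F_{\To}(V_{\To})\leq\ell+1$, the power gap satisfies $\Xi_{\To}>j-1+\bar\delta(R_\theta+1)=\alpha+j-1$.
\item Lemma~\ref{lem:score-summary} gives $\cS_A\lesssim\frac{\kappa_a}{\Lambda_a}\E[\1_A/(R_\theta+1)]+(\kappa_a-1)\E[\1_A/(R_\theta+1)^2]$. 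Hence the layer $E_r(\theta)=\{r\leq R_\theta<r+1\}$ contributes discrepancy $\lesssim K_a\P(E_r(\theta))/\sqrt{r+1}$.
\end{itemize}
The rate $1/\sqrt\ell$ appears only after averaging the start time over $[\To-1,\To)$. There Chen's time-smoothed profile lemma gives $\int\P(E_r(\theta))\,\dd\theta\lesssim1/\ell$ for $r<\ell/2$, and $\sum_{r<\ell/2}(r+1)^{-1/2}\ell^{-1}\lesssim\ell^{-1/2}$. Your Step~2 conflates that lemma (a $1/\ell$ bound for a time integral, used as an input) with the fixed-time target. Neither the layer decomposition nor the start-time averaging appears in your plan, and the ``spread of order $\sqrt{\log u}$'' plays no role.

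\textbf{From coupling to window bound.} You never specify this step. ``Shift versus spread'' is Chen's route, and it needs approximate monotonicity at the tail, which is exactly what forced $\alpha\asymp\log\log u$. The paper replaces it with the one-sided weighted comparison $\E[\1_AM_{\To}h(W_{\To})]\leq\E[\1_Ah(V_{\To})]$ for all nonnegative $h$, with $M_t=\e^{\Xi_t}$. This needs only that $M_tH_t(W_t)$ be a supermartingale; its drift is the negative arithmetic--geometric deficit $-\frac12\sum_i[(1-\bar\delta)+\bar\delta Y_i-Y_i^{\bar\delta}]\leq0$. So no exact martingale and no second-moment control are required. Also, all coordinates are perturbed (not a random one), and the weight switches at $\st$, after which $V$ and $W$ flip synchronously, not at jump times.

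Testing the comparison on successive bands gives $(1-c_0)A_\theta(\ell)\leq c_0\sum_{j\geq1}\e^{-j}A_\theta(\ell+j)+2D_{\mathcal E_\theta}$. This closes through $\sup_{r\geq\ell}\sqrt r\,\A_{t_a}((r,r+1])$ because $\varrho_*<1$. Starting points with $R_\theta<2\alpha$ are handled separately via the martingale $\e^{-F_t(V_t)}$.

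\textbf{Setup.} No bias reweighting is involved, since $T_{\mu_a}=P_{t_a}$ exactly. The reverse process must run on the horizon $T=t_a+\To$ and be observed at $\To$, where $V_{\To}\sim\nu_{t_a}$ and the edge bound $\kappa_a^{-1}\leq Y_i\leq\kappa_a$ holds throughout the coupling window. Running it all the way to the law $f\lam$ and looking at $\log f(X_t)$ measures the profile of $f$ itself, which satisfies no anti-concentration. It also discards the rate bounds from which every $\kappa_a$-dependence in $K_a$ is derived. In particular, the factor $(\log\kappa_a/(\kappa_a-1))^{1/2}$ comes from $\Lambda_a$ and the score-convexity constant, not from an optimized split.
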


Our proof introduces a new power coupling.  At the coupling
start, we freeze an exponent inversely proportional to the remaining distance
from the target band and split each reverse edge ratio into two geometric
powers.  This splitting is chosen so that exponential reweighting restores
exactly the unperturbed reverse flip rate of the perturbed trajectory, while
the remaining drift is the negative of the weighted arithmetic--geometric
mean deficit.  The resulting switched power supermartingale therefore yields
a weighted terminal comparison for every nonnegative terminal test function.
This comparison replaces the approximate pathwise monotonicity at tail used by
\citet{Chen2026}.  Paths on which the perturbed trajectory ends several
logarithmic bands above the reverse trajectory are not excluded; instead, they
incur an exponentially increasing weight.  Testing the comparison on
successive unit bands gives a contracting future-band recurrence whose total
coefficient is strictly smaller than one.  The coupling error is controlled
by extending the localized Duhamel--bridge argument of
\citet{XiangZhang2026} to the predictable coefficients of the power coupling,
and the initial gap layers are averaged using the time-smoothed profile
estimate of \citet[Lemma~4]{Chen2026}.  Because the frozen exponent has a
fixed numerator, no growing stopping buffer is required, and the additional
$\log\log u$ factor disappears.

\vspace{1em}

\noindent\textbf{The role of AI in this proof.}
Odin Automatic AI Research Agent was used to discover the proof. The final proofs were reorganized by the authors.

\vspace{1em}

\noindent\textbf{Notation.}
 The
point obtained from $x$ by flipping coordinate $i$ is denoted by $\sigma_i x$.
The indicator of an event $A$ is $\1_A$.  For a c\`adl\`ag process $X$,
$X_{t-}$ denotes the left limit.  We write
$A\lesssim B$ when $A\leq CB$ for a universal constant $C$.

\section{Preliminaries}\label{sec:prelim}

\subsection{Heat flow and information bands}

Let $(P_t)_{t\geq0}$ be the Boolean heat semigroup,
$P_tf(x)=\E f(x\odot\xi_t)$, where $\xi_t\sim\mu_{\e^{-t}}$.
Equivalently, after identifying a function on $G$ with its multilinear
extension, $P_tf(x)=f(\e^{-t}x)$.  Put $t_a=-\log a$, so that
$T_{\mu_a}=P_{t_a}$.
We assume in the rest of the paper that $f:G\to(0,\infty)$ and $\norm f_1=1$. The general nonnegative case can be handled by the strictly positive approximation $(f+\varepsilon)/(1+\varepsilon)$ for $\varepsilon>0$. See the proof of Theorem~\ref{thm:main} for details.
Write
$
 f_t=P_tf, d\nu_t=f_t\,d\lambda,
$
and, for an interval $I\subset\R$, define the size-biased logarithmic
profile
$
 \A_t(I)=\nu_t (\log f_t\in I).
$

\subsection{Reverse heat process}

The reverse process is the one introduced in
\citet[Section~2.5]{Chen2026}.  Set
$
 \To=2, T=t_a+\To.
$
The time $\To$ is the observation time in the reverse clock: the identity
$T-\To=t_a$ ensures that the reverse process at time $\To$ represents the
target heat time $t_a$, and it can be set to be any value larger than $1$, so we choose $\To=2$ for convenience.  For $0\leq t\leq T$, write
\begin{equation}\label{eq:reverse-heat-process}
      F_t(x)=\log f_{T-t}(x),\qquad
      Y_i(t,x)=\frac{f_{T-t}(\sigma_i x)}{f_{T-t}(x)},\qquad
      S_i(t,x)=\frac{1-Y_i(t,x)}2.  
\end{equation}
Here $\sigma_i x$ is obtained from $x$ by flipping coordinate $i$.
Let $N^{[1]},\ldots,N^{[n]}$ be independent Poisson random measures on
$[0,T]\times(0,\infty)$ with intensity $\dd t\dd z$, independent of
$V_0\sim\nu_T$, and let $(\F_t)_{0\leq t\leq T}$ be the completed
filtration they generate.  Define $V$ by
\[
 \dd V_t=\sum_{i=1}^n(-2V_{t-}^{(i)}e_i)
 \int_0^\infty
 \1_{\{0<z\leq Y_i(t,V_{t-})/2\}}N^{[i]}(\dd t,\dd z).
\]
Here $V_{t-}$ is the left limit and $e_i$ is the $i$th coordinate vector.
This is the time reversal, over the horizon $T$, of the forward heat flow
started from the probability density $f$.  In particular,
$
 V_t\sim\nu_{T-t}$, for $t\in[0,T]$,
and hence $V_{\To}\sim\nu_{t_a}$, i.e., the reverse process at time $\To$ follows the law of the targeted density. 

\section{Main proof}\label{sec:main-proof}

Like Lemma~1 in \citet{Chen2026}, we establish the following anti-concentration profile estimate for the reverse heat process to prove Theorem~\ref{thm:main}. In the rest of the paper, we denote the logarithm of the tail level $u$ by $\ell=\log u>0$ for $u > 1$.

\begin{proposition}\label{prop:fixed-band}
For every $\ell>0$, we have
\begin{equation}\label{eq:fixed-band}
 \A_{t_a}((\ell,\ell+1])
 \lesssim
 \kappa_a^2\left(\frac{\log\kappa_a}{\kappa_a-1}\right)^{1/2}
 \frac1{\sqrt\ell}.
\end{equation}
\end{proposition}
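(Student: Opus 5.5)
We prove the bound by comparing, under a coupling started at a random time, the reverse heat process $V$ with a perturbed copy whose log-density is pushed upward. The estimate is reduced to a recurrence over unit information bands. Throughout we work with $F_t(V_t)=\log f_{T-t}(V_t)$.

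\emph{Step 1: starting time and frozen exponent.} For a target band $B_k=(\ell+k,\ell+k+1]$ we start the coupling at the first time $\tau$ at which $F_t(V_t)$ enters a starting layer at distance $d\asymp \ell$ below $\ell$. Averaging over initial gap layers with the time-smoothed profile estimate of \citet[Lemma~4]{Chen2026} shows that the total size-biased mass of these layers is $O(1)$. At time $\tau$ we freeze the exponent $\beta=c/(\ell-F_\tau(V_\tau))$, which is inversely proportional to the remaining distance. We then define a perturbed process $W$ that agrees with $V$ before $\tau$. After $\tau$, $W$ flips coordinate $i$ at rate $\tfrac12 Y_i^{1+\beta}$ on the up-part and at a complementary geometric power on the down-part. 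In other words, each edge ratio $Y_i$ is split into the two powers $Y_i^{\theta}$ and $Y_i^{1-\theta}$. The point of this choice is to check, by It\^o's formula for jump processes, that
\[
M_t=\exp\big(\beta (F_t(W_t)-F_t(V_t))\big)\cdot(\text{compensator})
\]
is a supermartingale. Reweighting by $e^{\beta F}$ should restore exactly the reverse rate $\tfrac12 Y_i$ of $W$. The residual drift is $-\sum_i(\text{weighted AM}-\text{GM})\le0$, by the weighted AM--GM inequality, and the time derivative of $F_t$ is handled by the heat equation, which cancels the mean-rate term.

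\emph{Step 2: terminal comparison and band recurrence.} Optional stopping at time $\To$ gives, for every nonnegative test function $g$,
\[
\E\big[g(W_{\To})e^{\beta(F(W_{\To})-F(V_{\To}))}\big]\le \E\,g(V_{\To}).
\]
We take $g=\1_{B_k}$ together with the comparison of the drift. The perturbed trajectory gains roughly $\beta^{-1}$-many... more precisely, the score-energy estimate shows that $W$ climbs about $\beta\cdot(\text{energy})$, which is of order $\sqrt\ell$ worth of spread. This yields $\A_{t_a}(B_0)\le \sum_{k\ge0}\rho_k \A_{t_a}(B_k)+\text{error}$. Paths on which $W$ ends $k$ bands above $V$ pay the weight $e^{-\beta k}$, and with the numerator $c$ fixed small the coefficients satisfy $\sum_k\rho_k<1$. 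Iterating this contracting recurrence, together with the trivial bound $\sum_k\A(B_k)\le 1$, yields $\A_{t_a}(B_0)\lesssim \text{error}$.

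\emph{Step 3 (the main obstacle): coupling error.} We must bound the discrepancy between the law of $W_{\To}$ and an anti-concentrated law of size $\ell^{-1/2}$. We do this by extending the localized Duhamel--bridge argument of \citet{XiangZhang2026} to the predictable rates $Y_i^{\theta}$. The layerwise discrepancy is written via the Boolean heat bridge as an integral of the score energy $\sum_i (1-Y_i)^2$ against the bridge kernel. This is then bounded by the stopped score-energy estimate, which gives $\beta^2\cdot\ell\cdot \ell^{-1/2}$-type contributions. Since $\beta\asymp 1/\ell$ has a fixed numerator, no $\log\log u$ factor enters. The $a$-dependence comes from the bridge kernel and the rate bounds $Y_i\in[\kappa_a^{-1},\kappa_a]$, valid for $t\le\To$ by the smoothing effect of $P_{t_a}$. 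These produce the factors $\kappa_a^2$ and $(\log\kappa_a/(\kappa_a-1))^{1/2}$, the latter from the worst-case ratio of the AM--GM deficit to the quadratic energy over $[\kappa_a^{-1},\kappa_a]$. The delicate point I expect is controlling the switched weights uniformly on exceptional paths while keeping the predictable coefficients bounded so that the Duhamel expansion closes. I would first try truncating $\theta$ to a compact range and absorbing the truncation into the supermartingale drift.
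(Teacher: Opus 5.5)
Your outline names the right ingredients, but the mechanism that produces the contraction is set up incorrectly.

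\textbf{The weight.} With $M_t=\exp(\beta(F_t(W_t)-F_t(V_t)))$, a $W$-flip multiplies the weight by $X_i^{\beta}$, where $X_i=f_{T-t}(\sigma_iW_t)/f_{T-t}(W_t)$. Note that the reverse rate of $W$ is $X_i/2$, not $Y_i/2$. The coupling rates must be functions of $Y_i(t,V_{t-})$, since that is what lets the Duhamel--bridge error be controlled by the score energy of $V$. Then the weighted coefficient of $H_t(\sigma_iW_t)-H_t(W_t)$ has the form $X_i^{\beta}c(Y_i)$. This cannot equal $X_i/2$ for all $X_i$ unless $\beta=1$, and your compensator factor cannot repair a first-order coefficient. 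So $M_tH_t(W_t)$ is not a supermartingale for general reverse-harmonic $H$.

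\textbf{The contraction.} Even granting your comparison, its only penalty is $e^{-\beta k}$ with $\beta\asymp c/\ell$. Then $\sum_{k\geq1}\rho_k\approx\ell/c\gg1$, and shrinking $c$ makes this worse, not better.

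The paper instead uses the power gap $\Xi_t=F_t(W_t)-(1-\bar\delta)F_t(V_t)-\bar\delta F_\theta(V_\theta)$, with coefficient one on $F_t(W_t)$. For $Y<1$ this gives $\frac Y2\,XY^{\bar\delta-1}+\frac{1-Y^{\bar\delta}}2X=\frac X2$ for every $X$. The frozen exponent acts only on the $V$ side. On $\{F_{\To}(V_{\To})\leq\ell+1\}$ the convex combination is at most $\ell+1-\bar\delta(R_\theta+1)=\ell+1-\alpha$. So $W$ ending in $(\ell+j,\ell+j+1]$ forces $\Xi_{\To}>\alpha+j-1$ and costs $\e^{1-\alpha-j}$. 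The advantage of $W$ is thus the constant $\alpha$, not a $\sqrt\ell$ spread.

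This mechanism has three requirements:
\begin{itemize}
\item $\alpha$ must be a \emph{large} constant ($\alpha=5$), with $\bar\delta<1$ ensured by activating only when $R_\theta\geq2\alpha$.
\item The perturbation must be switched off at $\st$, the first time $F_t(V_t)\geq\ell+1$. This is also what caps the stopped score energy by $R_\theta+1+\log\kappa_a$.
\item The band recurrence needs the localized total variation $D_{\mathcal E_\theta}$, tested on $(\ell,\ell+1]$ and on $(\ell+1,\infty)$, to tie the terminal law of $W$ back to the band masses of $V$. Your Step 2 has no such link.
\end{itemize}

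\textbf{The start and averaging step.} This also fails as stated. Lemma~\ref{lem:profile} controls $\nu_t$-masses at deterministic times, not the law of $V_\tau$ at a hitting time. Moreover, paths along which $F_t(V_t)$ never visits a layer at distance $\asymp\ell$ below $\ell$ are never coupled; for instance, those staying within $O(1)$ of $\ell$ on $[0,\To]$. For these paths the only available bound is $O(\ell)$ bands times $O(1/\ell)$, which is the useless $O(1)$ you state.

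The paper starts at a deterministic $\theta$, averaged over $[\To-1,\To]$. It activates \emph{every} layer $E_r(\theta)=\{r\leq R_\theta<r+1\}$ with $2\alpha\leq r<\ell/2$, each with its own $\bar\delta=\alpha/(R_\theta+1)$. It then balances $D_{E_r(\theta)}\lesssim K_a\P(E_r(\theta))/\sqrt{r+1}$ (up to lower-order terms) against $\int_{\To-1}^{\To}\P(E_r(\theta))\,\dd\theta\lesssim1/\ell$. This gives $\sum_{r<\ell/2}(r+1)^{-1/2}\ell^{-1}\lesssim\ell^{-1/2}$. Only $O(1)$ bands near $\ell$ remain inactive, and starting points above $\ell+2$ are handled by the martingale $\e^{-F_t(V_t)}$. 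This layer sum is the actual source of the $1/\sqrt\ell$ rate, and the heuristic in your Step~3 does not replace it.
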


The proof of Proposition~\ref{prop:fixed-band} is deferred to Section~\ref{sec:anti-concentration-profile-estimate}.
Following the standard argument in \citet{Chen2026}, we can directly prove Theorem~\ref{thm:main} from Proposition~\ref{prop:fixed-band}.

\begin{proof}[Proof of Theorem~\ref{thm:main}]
      First suppose that $f>0$ and $\norm f_1=1$.  Fix $1 < v < u$ and write
      $\ell_v=\log v$.  Since $d\nu_{t_a}=f_{t_a}\,d\lambda$,
      Proposition~\ref{prop:fixed-band} gives
      \begin{align*}
       \lambda(\{P_{t_a}f\geq u\})
       &\leq\lambda(\{P_{t_a}f>v\})
        =\int_{\{\log f_{t_a}>\ell_v\}}f_{t_a}^{-1}\,\dd\nu_{t_a}\\
       &\leq\frac1v\sum_{j\geq0}\e^{-j}
             \A_{t_a}((\ell_v+j,\ell_v+j+1])\lesssim
       \kappa_a^2\left(\frac{\log\kappa_a}{\kappa_a-1}\right)^{1/2}
       \frac1{v\sqrt{\log v}}.
      \end{align*}
      Here the last step uses
      $
       \sum_{j\geq0}\frac{\e^{-j}}{\sqrt{\ell_v+j}}
       \leq\frac1{\sqrt{\ell_v}}\sum_{j\geq0}\e^{-j}
       \lesssim\frac1{\sqrt{\ell_v}}.
      $
      Letting $v\uparrow u$ proves \eqref{eq:main-theorem} for strictly
      positive $f$ and every $u > 1$.  
      
      Now let $f\geq0$ with $\norm f_1=1$ and use the strictly positive
      approximation $f_\varepsilon = (f+\varepsilon)/(1+\varepsilon)$.  Put
      $u_\varepsilon=(u+\varepsilon)/(1+\varepsilon) > 1$.  Since
      $P_{t_a}f_\varepsilon=(P_{t_a}f+\varepsilon)/(1+\varepsilon)$,
      $
       \{P_{t_a}f\geq u\}
       =\{P_{t_a}f_\varepsilon\geq u_\varepsilon\}.
      $
      Applying the strictly positive estimate to $f_\varepsilon$ at level
      $u_\varepsilon$, multiplying by $u$, and letting
      $\varepsilon\downarrow0$ proves \eqref{eq:main-theorem} for every
      nonnegative $f$ in the definition of $\psi_{\mu_a}$.
\end{proof}

To prove Proposition~\ref{prop:fixed-band}, we construct a process $W$ coupled to $V$. \citet{Chen2026} targeted to construct the coupling $(V_t,W_t)$ such that (1) the total variation distance between $\mathcal{L}(V_{\To})$ and $\mathcal{L}(W_{\To})$ is small, and (2) the approximate monotonicity at tail that $F_{\To}(V_{\To})$ is greater than $F_{\To}(W_{\To})$ at the tail level $\log u$. However, to achieve the bound in \eqref{eq:fixed-band}, the coupling needs to satisfy stronger conditions (see Lemmas~\ref{lem:discrepancy-summary} and \ref{lem:band-contraction-summary} below). We will introduce a power-coupling construction in Section~\ref{sec:power-coupling} to achieve these stronger conditions.

Fix a target level $u > 1$ with $\ell=\log u>0$, and a starting time
$\theta\in[\To-1,\To)$, which is the time $W_t$ starts to be perturbed from $V_t$.  Define the stopping time
\[
 \st=\inf\{t\in[\theta,\To]:F_t(V_t)\geq\ell+1\}\wedge\To
\]
and fix the universal constant $\alpha=5$.  Define
\begin{equation}\label{eq:R-theta-definition}
      R_\theta=[\ell-F_\theta(V_\theta)]_+,
      \qquad
      \mathcal E_\theta=\{R_\theta\geq2\alpha\},
      \qquad
      \bar\delta=\frac{\alpha\1_{\mathcal E_\theta}}{R_\theta+1}.  
\end{equation}
Compared to the construction in \citet{Chen2026}, our stopping time $\st$ is defined only based on  $V_t$ and does not depend on $\alpha$. Besides, the level $\alpha$ is set to be the constant in contrast to the choice of $\alpha \asymp \log \log u$ in \citet{Chen2026} which yields the additional $\log \log$ factor.

For $A\in\F_\theta$ with $A\subseteq\mathcal E_\theta$, define
\[
 \cS_A=\E\left[\1_A\bar\delta^2\int_\theta^\st
               \sum_{i=1}^n S_i(t,V_{t-})^2\,\dd t\right]
\text{ and }
 D_A=\sup_{B\subseteq G}
 \left|\P(A,W_{\To}\in B)-\P(A,V_{\To}\in B)\right|.
\]
Here $\cS_A$ is the stopped score energy generalizing Eq.~(21) in \citet{Chen2026} by constraining to an event $A$ and the localized total variation $D_A$ generalizes the total variation distance also by constraining to $A$. 
Recalling that $\kappa_a=(1+a)/(1-a)$, we denote two constants related to $a$ for the later use:
\[
 \Lambda_a=\frac{\kappa_a\log\kappa_a}{\kappa_a-1},
 \qquad
 K_a=\kappa_a^{3/2}\sqrt{\Lambda_a}
     =\kappa_a^2
       \left(\frac{\log\kappa_a}{\kappa_a-1}\right)^{1/2}.
\]

The first key result is to strengthen the total variation distance estimate for the coupling $(V_t,W_t)$ to the following lemma. Its proof is in Section~\ref{sec:discrepancy} which follows the
Duhamel--bridge--energy strategy of \citet[Section~3]{XiangZhang2026}.

\begin{lemma}[Localized total variation distance]
\label{lem:discrepancy-summary}
For every $A\in\F_\theta$ contained in $\mathcal E_\theta$,
\begin{equation}\label{eq:discrepancy-summary}
 D_A\lesssim
 \kappa_a\Lambda_a\sqrt{\cS_A\P(A)}
 +\kappa_a\Lambda_a^2\cS_A.
\end{equation}
\end{lemma}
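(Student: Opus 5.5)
We have not yet seen the power coupling itself, so I will work with the structure the introduction describes. On $[\theta,\st]$ the process $W$ jumps in coordinate $i$ at a rate $\tfrac12 Y_i(t,W_{t-})^{1\pm\bar\delta}$, or some predictable geometric split of that form. It is driven by the same Poisson measures $N^{[i]}$ as $V$, and it coincides with $V$ after $\st$ or before the first disagreement. The plan follows the Duhamel--bridge--energy route of \citet[Section~3]{XiangZhang2026}, carried out on the event $A$.

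\textbf{Step 1 (Duhamel).} For $B\subseteq G$, set $h_t(x)=\P(V_{\To}\in B\mid V_t=x)$; this is a backward Kolmogorov solution for the reverse generator $L_t$, with rates $Y_i/2$. Since $A\in\F_\theta$, Itô's formula for $h_t(W_t)$ on $[\theta,\To]$ gives
\[
\P(A,W_{\To}\in B)-\P(A,V_{\To}\in B)=\E\Big[\1_A\int_\theta^{\st}\sum_i(\tilde r_i-r_i)(t)\,\big(h_t(\sigma_iW_{t})-h_t(W_{t})\big)\,\dd t\Big],
\]
where $\tilde r_i-r_i$ is the rate perturbation. Until decoupling, $W=V$, and the perturbation there is $\tfrac12 Y_i(Y_i^{\pm\bar\delta}-1)$. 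Because $A\subseteq\mathcal E_\theta$, we have $\bar\delta\le\alpha/(2\alpha+1)<1/2$.

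\textbf{Step 2 (linearize).} A Taylor expansion in $\bar\delta\log Y_i$ splits the perturbation into a first-order term comparable to $\bar\delta\,Y_i\log Y_i$ and a quadratic remainder. The time is restricted to $t\le\To$, so the ratios $Y_i$ are evaluated at heat time $\ge T-\To=t_a$. At such times $Y_i\in[\kappa_a^{-1},\kappa_a]$, because $P_{t_a}$ mixes each coordinate with bias $a$. On this range:
\begin{itemize}
\item $|\log Y_i|\le\Lambda_a|1-Y_i|/\kappa_a\cdot$const, and $|\log Y_i|\lesssim\Lambda_a\,|S_i|$;
\item the prefactors $Y_i\le\kappa_a$ supply the factor $\kappa_a$.
\end{itemize}
The first-order term is therefore bounded by $\kappa_a\Lambda_a\,\bar\delta|S_i|$, and the remainder by $\kappa_a\Lambda_a^2\bar\delta^2S_i^2$.

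\textbf{Step 3 (the quadratic remainder).} Bound $|h(\sigma_ix)-h(x)|\le1$. Summing the remainder over $i$ and integrating then gives at most $\kappa_a\Lambda_a^2\,\cS_A$ directly. This is the second term in \eqref{eq:discrepancy-summary}.

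\textbf{Step 4 (the first-order term, bridge energy).} Apply Cauchy--Schwarz:
\[
\E\Big[\1_A\int\sum_i\bar\delta|S_i|\,|\nabla_ih_t|\Big]\le\sqrt{\cS_A}\,\Big(\E\Big[\1_A\int_\theta^{\To}\sum_i Y_i\,|\nabla_ih_t(V_t)|^2\dd t\Big]\Big)^{1/2}.
\]
The second factor is the quadratic variation of the martingale $h_t(V_t)$, restricted to $A$. That martingale takes values in $[0,1]$ and $A\in\F_\theta$, so
\[
\E\big[\1_A\,\langle h(V)\rangle_{\theta,\To}\big]=\E\big[\1_A(h_{\To}^2-h_\theta^2)\big]\le\P(A).
\]
This gives the first term $\kappa_a\Lambda_a\sqrt{\cS_A\P(A)}$ in \eqref{eq:discrepancy-summary}, uniformly in $B$.

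\textbf{Main obstacle.} The difficulty is the period after $W$ and $V$ separate. The Duhamel formula involves rates evaluated at $W_{t-}$, while $\cS_A$ is defined along $V$. Following \citet{XiangZhang2026}, I would first use the bridge and Girsanov structure: $W$ is absolutely continuous with respect to $V$, with density the compensated exponential of the rate perturbation. This lets me rewrite the $W$-expectation as a $V$-expectation weighted by that exponential, and then expand to second order. The delicate point is that the weight's deviation from $1$ is again controlled by $\bar\delta^2\sum S_i^2$. That would ensure that only the quadratic term $\kappa_a\Lambda_a^2\cS_A$ is generated, and that no cross term of lower order appears. The predictable frozen exponent $\bar\delta$, being $\F_\theta$-measurable, is what makes this localization to $A$ possible.
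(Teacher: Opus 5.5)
\textbf{There is a genuine gap: Step~1's Duhamel identity does not hold for the $W$ constructed in the paper.} You worked from a guessed coupling. In \eqref{eq:W-SDE} the flip intensity of $W$ in coordinate $i$ is $Y_i(t,V_{t-})/2+\delta_i(t,V_{t-})S_i(t,V_{t-})\1_{\{t\leq\st\}}$. Both the synchronized part and the perturbation are read off $V_{t-}$, not $W_{t-}$. After $\st$ the two processes flip together, so $V\odot W$ is frozen; they do not become equal.

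Take $h_t(x)=\P(V_{\To}\in B\mid V_t=x)$. The drift of $h_t(W_t)$ is then $\sum_i\bigl[\tfrac12\bigl(Y_i(t,V_t)-Y_i(t,W_t)\bigr)+\1_{\{t\leq\st\}}\delta_i(t,V_t)S_i(t,V_t)\bigr]\bigl(h_t(\sigma_iW_t)-h_t(W_t)\bigr)$. The first piece is present on all of $[\theta,\To]$ once $W\neq V$, including after $\st$, and $\cS_A$ does not control it. This is essentially the ``main obstacle'' you flag. The Girsanov remark does not remove it: $W$ is not a chain with perturbed reverse rates at its own state, so its law is not a small exponential tilt of the law of $V$ that you could expand to second order.

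\textbf{The missing idea is a two-variable test function.} It must be space-time harmonic for the synchronized generator $\overline{\mathcal L}^0_t$, whose rates are read at $x$. The paper uses $U_t(x,y)=\sum_\zeta H_t^\zeta(x)q_t^\zeta(x,y)$, which is the expectation of $\phi(x\odot y\odot V_{\To})$ given $V_t=x$. It satisfies $U_{\To}(x,y)=\phi(y)$ and $U_\theta(V_\theta,W_\theta)=\E[\phi(V_{\To})\mid\F_\theta]$. Dynkin's formula therefore leaves only $\E[\1_A\int_\theta^{\st}\mathcal B_tU_t(V_{t-},W_{t-})\,\dd t]$. Its coefficients $\delta_i(t,V_{t-})S_i(t,V_{t-})$ are evaluated along $V$, so Cauchy--Schwarz produces $\cS_A$ directly. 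No Taylor expansion is needed: the perturbation is already linear in $S_i$, with $0\leq\delta_i\leq\Lambda_a\bar\delta$.

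\textbf{Step~4 also bounds the wrong energy.} The quadratic variation of $h_t(V_t)$ controls discrete gradients at $V_t$. The Duhamel integrand instead involves $y$-differences of $U_t$ at the joint state $(V_t,W_t)$ with $W_t\neq V_t$, and at $(\sigma_iV_t,W_t)$ for $V$-only flips. The paper controls $Y_A=\E[\1_A\int_\theta^{\To}\Gamma_t(V_t,W_t)\,\dd t]$ by conditioning on $V_T=\zeta$:
\begin{itemize}
\item The $b_t^2$ part is $\lesssim\kappa_a\P(A)$, by the biased level-one inequality \eqref{eq:biased-level-one-used} together with \eqref{eq:bridge-b-control}.
\item The $a_t^2$ part comes from Dynkin's formula for $(q_t^\zeta)^2$ under the conditioned \emph{perturbed} generator of Lemma~\ref{lem:conditioned-power}. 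The perturbation term there is again bounded by $\sqrt{\kappa_a}\Lambda_a\cS_A^{1/2}Y_A^{1/2}$.
\end{itemize}
Young's inequality then closes this to $Y_A\lesssim\kappa_a\P(A)+\kappa_a\Lambda_a^2\cS_A$. This self-referential closure is the source of the term $\kappa_a\Lambda_a^2\cS_A$ in \eqref{eq:discrepancy-summary}, not a second-order Taylor remainder.
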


Our second key condition is to replace the approximate monotonicity at tail as in \citet[Lemma 3]{Chen2026} for the coupling $(V_t,W_t)$ to the following lemma. Its proof is deferred to Section~\ref{sec:band-contraction} which is based on constructing a switched power supermartingale specified in Section~\ref{sec:power-supermartingale} such that we can have a stronger monotonicity property (see Proposition~\ref{prop:power-supermartingale}) for the coupling $(V_t,W_t)$ than \citet[Lemma 3]{Chen2026}.

\begin{lemma}
\label{lem:band-contraction-summary} For $r\in\R$, define
$
 A_\theta(r)=\P\left(\mathcal E_\theta,
                  F_{\To}(V_{\To})\in(r,r+1]\right).
$
With $c_0=\e^{1-\alpha}=\e^{-4}$, we have
\begin{equation}\label{eq:band-contraction-summary}
 (1-c_0)A_\theta(\ell)
 \leq c_0\sum_{j\geq1}\e^{-j}A_\theta(\ell+j)
      +2D_{\mathcal E_\theta}.
\end{equation}
\end{lemma}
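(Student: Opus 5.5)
The plan is to build a switched power supermartingale from the power coupling and test its terminal value against band indicators. On $\mathcal E_\theta$ the frozen exponent is $\bar\delta=\alpha/(R_\theta+1)$. We want a nonnegative process $M_t$ on $[\theta,\To]$ with two properties. First, $M_\theta=\1_{\mathcal E_\theta}$. Second, at time $\To$,
\[
M_{\To}=\1_{\mathcal E_\theta}\exp\bigl(\bar\delta\,[F_{\To}(W_{\To})-F_{\To}(V_{\To})]\bigr)\times(\text{bounded correction}).
\]
We will take Proposition~\ref{prop:power-supermartingale} as supplying this process. By construction of the power coupling, the reweighting restores the reverse jump rate of $W$, and the leftover drift is minus a weighted AM--GM deficit, so $M$ is a supermartingale.

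The weighted terminal comparison then reads as follows: for every nonnegative terminal test function $g$,
\[
\E\bigl[\1_{\mathcal E_\theta}\,g(W_{\To})\,e^{-\bar\delta\Delta}\bigr]\le \E[\1_{\mathcal E_\theta} g(V_{\To})],
\]
where $\Delta$ is the terminal log-gap, or the analogous form with the weight on the $V$ side.

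Next, apply this with $g=\1\{F_{\To}\in(\ell,\ell+1]\}$, after splitting paths according to which unit band $(\ell+j,\ell+j+1]$ contains $F_{\To}(V_{\To})$. The aim is to show that the mass of $W$ landing in the band $(\ell,\ell+1]$ is at most about $c_0\sum_{j\ge1}e^{-j}A_\theta(\ell+j)$. The two ingredients are these.

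\textbf{(i) Paths that stay low.} If $V$ stays below the target band, the gap $R_\theta$ has to be closed by $W$ alone. This closure costs the factor $e^{-\bar\delta R_\theta}\le e^{-\alpha R_\theta/(R_\theta+1)}\le e^{1-\alpha}=c_0$, using $R_\theta\ge 2\alpha$.

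\textbf{(ii) Paths where $V$ ends in band $j$.} Here the exponential weight $e^{\bar\delta(\cdots)}$ contributes at most $e^{-j}$ times a constant, since $\bar\delta\le 1$ or after absorbing the bound.

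Combining these with the trivial bound $A_\theta(\ell)\le \P(\mathcal E_\theta,F_{\To}(W_{\To})\in(\ell,\ell+1])+D_{\mathcal E_\theta}$ gives
\[
A_\theta(\ell)\le c_0A_\theta(\ell)+c_0\sum_{j\ge1}e^{-j}A_\theta(\ell+j)+2D_{\mathcal E_\theta}.
\]
This bound uses the total-variation swap twice, once on each side, which accounts for the factor $2$. Rearranging gives \eqref{eq:band-contraction-summary}.

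\textbf{Main obstacle.} The hardest step is making the test-function bookkeeping exact: the $W$-band mass has to be converted into $V$-bands with precisely the coefficient $c_0e^{-j}$. This depends on how $\st$ stops $V$ at level $\ell+1$ and on the switching of the power weight after $\st$. My first approach would be to choose $g$ as the weighted indicator $e^{\bar\delta(F-\ell)}\1\{F>\ell\}$, bounded band by band. I would then use $\bar\delta(R_\theta+1)=\alpha$ so that the starting weight equals exactly $e^{-\alpha}$ relative to level $\ell+1$. This yields $e^{1-\alpha}$ per unit band, and summing over bands produces the future-band recurrence.
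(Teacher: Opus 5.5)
There is a genuine gap in your item (ii), which is exactly where the future-band terms $c_0\e^{-j}A_\theta(\ell+j)$ have to come from.

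\textbf{Why (ii) fails.} The comparison is $\E[\1_A M_{\To}h(W_{\To})]\le\E[\1_A h(V_{\To})]$, with the weight on the $W$ side. It can therefore only make an event small if $M_{\To}$ is bounded \emph{below} on it. Its right-hand side is $A_\theta(\ell+j)$ only if you test with $h=\1_{(\ell+j,\ell+j+1]}\circ F_{\To}$. Testing only the zeroth band, as you propose, puts $A_\theta(\ell)$ on the right and says nothing about future bands. Moreover, consider the paths you treat in (ii), where $F_{\To}(W_{\To})\in(\ell,\ell+1]$ and $F_{\To}(V_{\To})\in(\ell+j,\ell+j+1]$ with $j\geq1$. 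There $\st\le\To$ and
\[
\Xi_{\To}=F_{\To}(W_{\To})-F_{\To}(V_{\To})+\bar\delta\bigl(F_\st(V_\st)-F_\theta(V_\theta)\bigr)<\alpha+1-j+\bar\delta\log\kappa_a,
\]
so the weight decays in $j$, and no factor $c_0\e^{-j}$ can be extracted on these downward crossings.

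\textbf{The missing step.} First reverse the direction of the crossing by testing $D_{\mathcal E_\theta}$ on the half-line $(\ell+1,\infty)$:
\[
\P\bigl(\mathcal E_\theta,\,F_{\To}(V_{\To})>\ell+1\geq F_{\To}(W_{\To})\bigr)\le\P\bigl(\mathcal E_\theta,\,F_{\To}(W_{\To})>\ell+1\geq F_{\To}(V_{\To})\bigr)+D_{\mathcal E_\theta}.
\]
Then split the upward crossings into pieces $\{F_{\To}(W_{\To})\in(\ell+j,\ell+j+1],\,F_{\To}(V_{\To})\le\ell+1\}$. On each piece $\Xi_{\To}>\alpha+j-1$; check this in both the $\st=\To$ and $\st<\To$ branches of the definition of $\Xi$. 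The comparison with the band-$j$ indicator then bounds each piece by $c_0\e^{-j}A_\theta(\ell+j)$. This half-line swap is the second use of $D_{\mathcal E_\theta}$. The first is $A_\theta(\ell)\le\P(\mathcal E_\theta,F_{\To}(W_{\To})\in(\ell,\ell+1])+D_{\mathcal E_\theta}$.

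\textbf{The weight is also misdescribed.} It is not $\exp(\bar\delta[F_{\To}(W_{\To})-F_{\To}(V_{\To})])$ times a bounded correction. Before $\st$,
\[
\Xi_t=F_t(W_t)-F_t(V_t)+\bar\delta\bigl(F_t(V_t)-F_\theta(V_\theta)\bigr),
\]
with the $\bar\delta$-term frozen at $\st$. The coefficient on the gap is $1$, and $\bar\delta$ multiplies only the increment of $V$.
\begin{itemize}
\item The unit coefficient on the gap is what produces the factor $\e^{-j}$.
\item The $\bar\delta$-term produces $c_0=\e^{1-\alpha}$, through $F_\theta(V_\theta)=\ell-R_\theta$ and $\bar\delta(R_\theta+1)=\alpha$.
\item With your form and $\bar\delta\le 5/11$, you would get only $\e^{-\bar\delta j}$ decay and no dependence on $R_\theta$. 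So your heuristic (i) does not follow from the weight you wrote down, even though its conclusion matches the paper's $j=0$ case.
\end{itemize}
Likewise, the test function $\e^{\bar\delta(F-\ell)}\1\{F>\ell\}$ in your last paragraph puts \emph{increasing} weights on $A_\theta(\ell+j)$ on the right-hand side. That is the opposite of what the contracting recurrence needs.
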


The third key result is the stopped score estimate.  It is similar to \citet[Lemma~8]{Chen2026}, with the stopping time $\st$ changed. The proof of the following lemma is in Section~\ref{sec:stopped-score-energy}.

\begin{lemma}[Stopped score energy]\label{lem:score-summary}
We have
\[
 \E\left[\left.\int_\theta^\st\sum_{i=1}^n
 S_i(t,V_{t-})^2\,\dd t\right|\F_\theta\right]
 \leq\frac{\kappa_a-1}{\log\kappa_a}
       \bigl(R_\theta+1+\log\kappa_a\bigr).
\]
Moreover, for every $A\in\F_\theta$ contained in
$\mathcal E_\theta$,
\begin{equation}\label{eq:power-energy-summary}
 \cS_A\lesssim
 \frac{\kappa_a}{\Lambda_a}
 \E\left[\frac{\1_A}{R_\theta+1}\right]
 +(\kappa_a-1)
 \E\left[\frac{\1_A}{(R_\theta+1)^2}\right].
\end{equation}
\end{lemma}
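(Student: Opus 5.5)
We prove the first inequality by an Itô/compensator computation for a suitable function of $F_t(V_t)$ on $[\theta,\st]$. The second inequality then follows by multiplying by $\1_A\bar\delta^2$ and taking expectations.

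\textbf{Generator identity.} The process $t\mapsto F_t(V_t)=\log f_{T-t}(V_t)$ has two contributions: a drift from the time derivative of $\log f_{T-t}$, and jumps of size $\log Y_i(t,V_{t-})$ occurring at rate $Y_i/2$ in each coordinate. Since $\partial_s f_s=Lf_s$ with $Lg(x)=\tfrac12\sum_i(g(\sigma_ix)-g(x))$, we have
\[
 \partial_t F_t=-\frac{Lf_{T-t}}{f_{T-t}}=-\frac12\sum_i(Y_i-1).
\]
The compensator of $F_t(V_t)$ therefore has density
\[
 \sum_i\Bigl[\tfrac{Y_i}{2}\log Y_i-\tfrac12(Y_i-1)\Bigr]=\tfrac12\sum_i h(Y_i),
 \qquad h(y)=y\log y-y+1\ge0 .
\]
This is the entropy-production identity, and it shows that $F_t(V_t)$ is a submartingale.

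\textbf{Comparing $h$ with $S_i^2$.} We need $h(Y_i)\gtrsim S_i^2=(1-Y_i)^2/4$ with the correct $\kappa_a$-dependent constant. Because $f_{T-t}=P_{T-t}f$ with $T-t\ge t_a$ for $t\le\To$, each ratio satisfies $Y_i\in[\kappa_a^{-1},\kappa_a]$, a standard consequence of $f_s=P_{s-t_a}P_{t_a}f$ (or directly of $\mu_a$-smoothing). On this interval one checks the elementary bound
\[
 \frac{(1-y)^2}{4}\le \frac{\kappa_a-1}{2\log\kappa_a}\,h(y).
\]
Indeed, $h(y)/(1-y)^2$ is decreasing in $y$, so its minimum over the interval is attained at $y=\kappa_a$, and one compares $h(\kappa_a)$ with $(\kappa_a-1)^2\log\kappa_a/(\kappa_a-1)$ up to a factor of $2$. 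This computation needs care but is routine. Consequently
\[
 \sum_i S_i^2\le \frac{\kappa_a-1}{\log\kappa_a}\,\dd\langle\text{compensator}\rangle/\dd t .
\]

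\textbf{Optional stopping.} Applying optional stopping between $\theta$ and $\st$ gives
\[
 \E\Bigl[\int_\theta^\st\sum_i S_i^2\,\dd t\,\Big|\,\F_\theta\Bigr]
 \le\frac{\kappa_a-1}{\log\kappa_a}\,\E\bigl[F_\st(V_\st)-F_\theta(V_\theta)\,\big|\,\F_\theta\bigr].
\]
Before time $\st$ we have $F<\ell+1$, and the overshoot at $\st$ is at most one jump. Each jump has size at most $\log\kappa_a$, and the time-drift is continuous. Hence $F_\st(V_\st)\le \ell+1+\log\kappa_a$, and the right-hand increment is at most $\ell+1+\log\kappa_a-F_\theta(V_\theta)$. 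When $F_\theta(V_\theta)<\ell$ this is $R_\theta+1+\log\kappa_a$. When $F_\theta(V_\theta)\ge\ell$ the bound is even smaller, since $\st$ may be $\theta$ or the increment is at most $1+\log\kappa_a$. Integrability holds because $G$ is finite and all rates are bounded. This proves the first claim.

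\textbf{Second inequality.} On $A\subseteq\mathcal E_\theta$ we have $\bar\delta^2=\alpha^2/(R_\theta+1)^2$, with $\alpha=5$. Conditioning on $\F_\theta$ and using the first claim gives
\[
 \cS_A\lesssim\frac{\kappa_a-1}{\log\kappa_a}\,\E\Bigl[\1_A\,\frac{R_\theta+1+\log\kappa_a}{(R_\theta+1)^2}\Bigr].
\]
The first term is $(\kappa_a-1)/\log\kappa_a=\kappa_a/\Lambda_a$ times $\E[\1_A/(R_\theta+1)]$. The second term is $(\kappa_a-1)\E[\1_A/(R_\theta+1)^2]$. This is exactly \eqref{eq:power-energy-summary}.

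\textbf{Main obstacle.} The delicate points are the uniform ratio bound $Y_i\in[\kappa_a^{-1},\kappa_a]$ and the sharp constant in the $h$ versus $(1-y)^2$ comparison, since these fix the $\kappa_a$ dependence. We would verify the ratio bound first from the explicit one-coordinate kernel of $P_{t_a}$.
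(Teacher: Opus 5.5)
Your proposal is correct and follows essentially the same route as the paper: the compensator $\frac12\sum_i(Y_i\log Y_i-Y_i+1)$ of $F_t(V_t)$, the comparison with $S_i^2$ on $[\kappa_a^{-1},\kappa_a]$, the pathwise bound $F_\st(V_\st)-F_\theta(V_\theta)\le R_\theta+1+\log\kappa_a$ (zero increment when $\st=\theta$), optional stopping, and multiplication by $\bar\delta^2=\alpha^2/(R_\theta+1)^2$ on $A$. The constant check you call routine does hold: since $(y\log y-y+1)/(1-y)^2$ is decreasing, it reduces to $\log\kappa_a\ge 2(\kappa_a-1)/(\kappa_a+1)$ for $\kappa_a\ge1$, and this is the convexity estimate the paper takes from \citet[Eq.~(40)]{Chen2026}.
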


The last key result is the time-smoothed anti-concentration profile estimate proved by \citet{Chen2026}.

\begin{lemma}[{\citet[Lemma 4]{Chen2026}}]
\label{lem:profile}
For every $\ell>2$, we have
\[
 \int_0^\infty\A_t((\ell,\ell+1])\,\dd t\lesssim\frac1\ell.
\]
\end{lemma}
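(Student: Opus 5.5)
The plan is to run an occupation-time argument for the one-dimensional process $F_t(V_t)=\log f_{T-t}(V_t)$ along the reverse heat process. By the marginal identity $V_t\sim\nu_{T-t}$, letting the horizon $T\to\infty$ (the construction of Section~\ref{sec:prelim} works for any $T$) gives
\[
 \int_0^\infty\A_t((\ell,\ell+1])\,\dd t=\lim_{T\to\infty}\E\int_0^T\1_{\{F_t(V_t)\in(\ell,\ell+1]\}}\,\dd t .
\]
So it suffices to bound the expected time that $F_t(V_t)$ spends in the band by $C/\ell$, uniformly in $T$.

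\textbf{Step 1 (semimartingale decomposition).} Write $F_t(V_t)$ as a semimartingale. Between jumps, $F$ evolves by $\partial_t\log f_{T-t}=-(\Delta f_{T-t})/f_{T-t}=\sum_i(1-Y_i)/2$. A flip of coordinate $i$ occurs at rate $Y_i/2$ and changes $F$ by $\log Y_i$. The compensated drift is therefore
\[
 b_t=\tfrac12\sum_i\bigl(Y_i\log Y_i-Y_i+1\bigr)\ \ge0 ,
\]
so $F_t(V_t)$ is a submartingale. The jump intensity of the martingale part is $\Gamma_t=\tfrac12\sum_iY_i(\log Y_i)^2$. Both $b_t$ and $\Gamma_t$ are comparable to $\sum_iS_i^2$ up to factors depending only on how large $|\log Y_i|$ can be. One also has the a priori bound $\E[F_T(V_T)-F_0(V_0)]=\E\int_0^Tb_t\,\dd t$, and the right side is bounded by an entropy difference, namely relative entropies of $f$ and $f_T$.

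\textbf{Step 2 (Itô/Tanaka with a test function of slope $1/\ell$).} Choose $\varphi$ with $\varphi''\asymp\ell^{-1}$ on $(\ell-1,\ell+2]$, $\varphi=0$ below $\ell-2$, linear of slope $\lesssim1$ above the band, and $\varphi'\ge0$. Apply Itô's formula for jump processes to $\varphi(F_t(V_t))$:
\[
 \E\varphi(F_T(V_T))-\E\varphi(F_0(V_0))=\E\int_0^T\Bigl(\varphi'(F)\,b_t+\sum_i\tfrac{Y_i}{2}\bigl[\varphi(F+\log Y_i)-\varphi(F)-\varphi'(F)\log Y_i\bigr]\Bigr)\dd t .
\]
The bracket is nonnegative by convexity near the band, and on the band it is $\gtrsim\ell^{-1}\Gamma_t$, truncated for large jumps. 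The left side is controlled by the first moment of $F$ above level $\ell-2$. By Markov-type size-biasing, $\nu_s(\log f_s>\ell-2)\le1$, and the relevant moment is $O(1)$ after an additional logarithmic cutoff. This yields
\[
 \E\int_0^T\1_{\{F_t\in(\ell,\ell+1]\}}\,\Gamma_t\,\dd t\lesssim\ell\cdot O(1)/\ell\ \text{ up to constants,}
\]
or at least a bound of the form $\lesssim1$ after rescaling.

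\textbf{Step 3 (main obstacle: a lower bound on the band).} It remains to convert the bound on $\Gamma_t$ (or $b_t$) into a bound on occupation time. This requires the activity $\Gamma_t+b_t$ not to be small while $F$ sits in a high band for a long time. I expect this to be the hard step, since pointwise one can have all $Y_i\approx1$.

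I would first try a time-smoothing argument instead of a pointwise bound. The function $s\mapsto\log f_s(x)$ satisfies $|\partial_s\log f_s|\le\sum_i|1-Y_i|/2$. Combined with the Boolean reverse-hypercontractive/Harnack bound $f_{s+1}\ge c\,f_s^{\,\cdot}$, this should show that in one unit of heat time the level $\log f$ drops by order $1$ at any point at height $\ell$ (since $f_s\to1$). Equivalently, $F$ cannot remain in a unit band for reverse time $\gg1$ without accumulating drift plus quadratic variation $\gtrsim1$.

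Summing over excursions through the band, this gives an expected occupation time bounded by a constant times the expected number of excursions. An upcrossing count from level $\ell$ to $\ell+1$ of the submartingale $F$ can then be bounded via the $1/\ell$ slope of $\varphi$ (equivalently by the probability $\lesssim1/\ell$, heuristically by Doob's maximal inequality for the martingale $e^{-F_t(V_t)}$ combined with the entropy budget). This would give $\int_0^\infty\A_t((\ell,\ell+1])\,\dd t\lesssim1/\ell$ for $\ell>2$.
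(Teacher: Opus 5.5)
The paper gives no argument for this estimate. It imports it as a black box from \citet[Lemma~4]{Chen2026}. Judged on its own, your outline does not prove it, because the factor $1/\ell$ is never actually produced.

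\textbf{A repairable problem in Step~2.} With $\varphi$ linear above the band, the boundary term $\E\varphi(F_T(V_T))$ has the size of $\E_{\nu_0}[(\log f-\ell)_+]$. This is not $O(1)$: for $f$ close to $2^n\1_{\{x_0\}}$ it is about $n\log 2$. Cutting $\varphi$ off makes it concave somewhere, and then the jump terms are no longer nonnegative.

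The repair is to let the drift pay for that concavity. Take $\varphi$ bounded with $\Phi(y)=y\varphi(\log y)$ convex, i.e.\ $\varphi+\varphi'$ nondecreasing and rising by $O(1)$ across the band. A direct computation shows that the generator of $\varphi(F_t(V_t))$ is $\frac12\e^{-F}\sum_i D_\Phi(\e^{F}Y_i,\e^{F})\geq0$, where $D_\Phi$ is the Bregman divergence of $\Phi$.

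This gives a band-localized activity bound of order one. Activity should be measured by something like $\sum_i(\sqrt{Y_i}-1)^2$, since over all heat times the $Y_i$ are not uniformly bounded: $\kappa_a^{-1}\le Y_i\le\kappa_a$ needs heat time at least $t_a$. Even so, this is an occupation bound in the intrinsic clock only; no power of $\ell$ has appeared yet.

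\textbf{The genuine gap is Step~3.} To convert this into real-time occupation $\lesssim1/\ell$, you need the activity at height $\ell$ to be of order $\ell$ on average. Equivalently, each traversal of the band must take time $O(1/\ell)$. Your Step~3 aims only at an order-one change of $\log f$ per unit heat time, via an unspecified pointwise Harnack bound. That at best gives occupation $O(1)$.

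You then try to recover $1/\ell$ by asserting that the expected number of band upcrossings is $\lesssim1/\ell$. This is false.
\begin{itemize}
\item By optional stopping for the martingale $\e^{-F}$, the expected number of upcrossings of $(\ell,\ell+1]$ is always at most $1/(1-\e^{-1})$.
\item It is also at least the probability of crossing the band.
\item For $f\approx2^n\1_{\{x_0\}}$ with $n\log2\gg\ell$, the path $F_t(V_t)$ runs by small jumps from about $0$ to about $n\log2$, so that probability is close to one.
\end{itemize}
In that example, $\int_0^\infty\A_t((\ell,\ell+1])\,\dd t\asymp1/\ell$ holds because $\sum_iS_i^2\approx2\ell$ at height $\ell$. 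The $1/\ell$ comes from the speed of crossing, not from rarity of crossings. Neither Doob's inequality for $\e^{-F}$ nor the entropy budget (which is not $O(1)$) can produce it.

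What is missing is a hypercube functional inequality at the right scale. For instance, apply the log-Sobolev inequality (equivalently, small-set expansion) at each fixed $t$ to a band-localized piece $f_t\,\eta(\log f_t)$. Its entropy is at least $(\ell-O(1))$ times its mass in the band. This forces the localized Dirichlet energy at height $\ell$ to be $\gtrsim\ell\,\A_t((\ell,\ell+1])$. Integrating in $t$ against the $O(1)$ bound of the repaired Step~2 then yields $1/\ell$.
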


\subsection{Anti-concentration profile estimate}\label{sec:anti-concentration-profile-estimate}

Combining Lemmas~\ref{lem:discrepancy-summary} -- \ref{lem:profile}, we can prove Proposition~\ref{prop:fixed-band} as follows.

\begin{proof}[Proof of Proposition~\ref{prop:fixed-band}]
Assume first that $\ell\geq C K_a^2$, where $C$ is a sufficiently large
universal constant. The elementary inequalities
$(\kappa_a-1)/\kappa_a\leq\log\kappa_a\leq\kappa_a-1$
give $1\leq\Lambda_a\leq\kappa_a$ and $K_a\geq1$. So we can choose sufficiently large $C$ such that $\ell\geq64$. We will discuss the small $\ell$ case in the end of the proof. The argument has three steps: (1) active discrepancy to control $D_{\mathcal E_\theta}$ mixing with the start time $\theta$, (2) inactive starting points to control $\overline{A}_\theta(\ell)$ mixing with the start time $\theta$, and (3) closing the recurrence using \eqref{eq:band-contraction-summary} to control our target on $\A_{t_a}((\ell,\ell+1])$.

\medskip
\noindent\emph{Step 1: active discrepancy.}
Let $m=\lfloor\ell/2\rfloor$ and decompose the active event into
\[
 E_r(\theta)=\{r\leq R_\theta<r+1\},
\text{ for }2\alpha\leq r<m,
 \text{ and }
 E_\infty(\theta)=\{R_\theta\geq m\}.
\]
Here and below $r$ ranges over integers.  From
\eqref{eq:power-energy-summary}, we have
\[
 \cS_{E_r(\theta)}\lesssim
 \left(\frac{\kappa_a}{\Lambda_a(r+1)}
       +\frac{\kappa_a-1}{(r+1)^2}\right)
 \P(E_r(\theta)).
\]
Substitution into \eqref{eq:discrepancy-summary}, together with
$\sqrt{x+y}\leq\sqrt x+\sqrt y$, and
$\sqrt{\kappa_a-1}\leq\kappa_a$, gives
\begin{equation}\label{eq:layer-discrepancy}
 D_{E_r(\theta)}\lesssim
 \left(\frac{K_a}{\sqrt{r+1}}
 +\frac{\kappa_a^2\Lambda_a}{r+1}
 +\frac{\kappa_a^2\Lambda_a^2}{(r+1)^2}\right)
 \P(E_r(\theta)).
\end{equation}
The same calculation, using $R_\theta\geq m\asymp\ell$ and
$\P(E_\infty(\theta))\leq1$, gives
\begin{equation}\label{eq:tail-layer-discrepancy}
 D_{E_\infty(\theta)}\lesssim
 \frac{K_a}{\sqrt\ell}
 +\frac{\kappa_a^2\Lambda_a}{\ell}
 +\frac{\kappa_a^2\Lambda_a^2}{\ell^2}.
\end{equation}

On $E_r(\theta)$ one has
$\ell-r-1<F_\theta(V_\theta)\leq\ell-r$.  Since
$F_\theta=\log f_{T-\theta}$ and $V_\theta\sim\nu_{T-\theta}$, the
change of variables $t=T-\theta$ gives
\begin{align*}
 \int_{\To-1}^{\To}\P(E_r(\theta))\,\dd\theta
 &\leq\int_{\To-1}^{\To}
 \A_{T-\theta}((\ell-r-1,\ell-r])\,\dd\theta =\int_{t_a}^{t_a+1}
 \A_t((\ell-r-1,\ell-r])\,\dd t.
\end{align*}
For $r<m$, the lower endpoint $\ell-r-1$ is comparable to $\ell$ and
exceeds $\To = 2$.  Lemma~\ref{lem:profile} therefore yields
\begin{equation}\label{eq:layer-mass-average}
 \int_{\To-1}^{\To}\P(E_r(\theta))\,\dd\theta\lesssim\frac1\ell.
\end{equation}

The events $E_r(\theta)$ and $E_\infty(\theta)$ form a disjoint partition
of $\mathcal E_\theta$.  Additivity of the corresponding signed terminal
measures, followed by the triangle inequality, gives
\[
 D_{\mathcal E_\theta}
 \leq\sum_{2\alpha\leq r<m}D_{E_r(\theta)}
      +D_{E_\infty(\theta)}.
\]
Combining this inequality with
\eqref{eq:layer-discrepancy}--\eqref{eq:layer-mass-average}, and using
\[
 \sum_{r\leq m}(r+1)^{-1/2}\lesssim\sqrt m,
 \quad
 \sum_{r\leq m}(r+1)^{-1}\lesssim\log(e m),
 \quad
 \sum_{r\geq0}(r+1)^{-2}\lesssim1,
\]
we obtain
\begin{equation}\label{eq:active-discrepancy-preabsorb}
 \int_{\To-1}^{\To}D_{\mathcal E_\theta}\,\dd\theta
 \lesssim
 \frac{K_a}{\sqrt\ell}
 +\frac{\kappa_a^2\Lambda_a\log(e\ell)}{\ell}
 +\frac{\kappa_a^2\Lambda_a^2}{\ell}.
\end{equation}
The last two terms are lower order when $\ell\geq C K_a^2$.
Indeed, $K_a^2=\kappa_a^3\Lambda_a$ and
$1\leq\Lambda_a\leq\kappa_a$.  Moreover,
$x\mapsto\log(e x)/\sqrt x$ decreases for $x\geq e$, so
\[
 \frac{\kappa_a^2\Lambda_a\log(e\ell)/\ell}
      {K_a/\sqrt\ell}
 \leq\frac{\log(eC K_a^2)}{\sqrt C\,\kappa_a}\lesssim1,
\text{ and }
 \frac{\kappa_a^2\Lambda_a^2/\ell}{K_a/\sqrt\ell}
 \leq\frac{\Lambda_a}{\sqrt C\,\kappa_a}\lesssim1.
\]
Thus, we can choose sufficiently large $C$ such that
\begin{equation}\label{eq:active-discrepancy-average}
 \int_{\To-1}^{\To}D_{\mathcal E_\theta}\,\dd\theta
 \lesssim\frac{K_a}{\sqrt\ell}.
\end{equation}

\medskip
\noindent\emph{Step 2: inactive starting points.}
Define
$
 \overline{A}_\theta(\ell)
 =\P\left(\mathcal E_\theta^c,
       F_{\To}(V_{\To})\in(\ell,\ell+1]\right).
$
Since $\mathcal E_\theta^c=\{F_\theta(V_\theta)>\ell-2\alpha\}$,
we first treat the near region
$\ell-2\alpha<F_\theta(V_\theta)\leq\ell+2$.  It is covered by the
finitely many unit intervals
$(\ell+q,\ell+q+1]$, for $q=-2\alpha,\ldots,1$.  For each such $q$, we have
\begin{align*}
 &\int_{\To-1}^{\To}\P\left(F_\theta(V_\theta)\in(\ell+q,\ell+q+1]\right)\,\dd\theta =\int_{t_a}^{t_a+1}\A_t((\ell+q,\ell+q+1])\,\dd t
 \lesssim\frac1\ell,
\end{align*}
where the last inequality follows from Lemma~\ref{lem:profile}; here
$\ell+q>2$ and $\ell+q\asymp\ell$.  Since the number of intervals depends
only on the fixed constant $\alpha$,
\begin{equation}\label{eq:inactive-near}
 \int_{\To-1}^{\To}\P\left(\ell-2\alpha<F_\theta(V_\theta)\leq\ell+2\right)\,\dd\theta
 \lesssim\frac1\ell.
\end{equation}

For the higher starting layers, we use the reciprocal reverse martingale
only at this point.
The time-dependent generator of $V$is
\begin{equation}\label{eq:reverse-generator}
 \widetilde{\mathcal L}_tg(x)
 =\frac12\sum_{i=1}^nY_i(t,x)\{g(\sigma_i x)-g(x)\}.
\end{equation}
The heat equation first gives
\begin{equation}\label{eq:F-time-derivative}
 \partial_tF_t(x)
 =-\frac{\partial_sf_s(x)}{f_s(x)}\bigg|_{s=T-t}
 =\frac12\sum_i(1-Y_i(t,x))=\sum_iS_i(t,x).
\end{equation}
Let $g_t(x)=\e^{-F_t(x)}=1/f_{T-t}(x)$.  Since
$g_t(\sigma_i x)=g_t(x)/Y_i(t,x)$, \eqref{eq:reverse-generator} and
\eqref{eq:F-time-derivative} give
\[
 (\partial_t+\widetilde{\mathcal L}_t)g_t(x)
 =-g_t(x)\sum_iS_i(t,x)
  +\frac{g_t(x)}2\sum_i(1-Y_i(t,x))=0.
\]
Thus the bounded process $\e^{-F_t(V_t)}$ is an $(\F_t)$-martingale.
Conditional Markov's inequality gives
\[
 \P\left(F_{\To}(V_{\To})\leq\ell+1\mid\F_\theta\right)
 \leq\e^{\ell+1-F_\theta(V_\theta)}.
\]
Consequently, on the event
$\{\ell+2+k<F_\theta(V_\theta)\leq\ell+3+k\}$, this conditional
probability is at most $\e^{-1-k}$.  Hence, we have
\begin{align*}
 &\int_{\To-1}^{\To}\P\left(\ell+2+k<F_\theta(V_\theta)\leq\ell+3+k,\,
                 F_{\To}(V_{\To})\in(\ell,\ell+1]\right)\,\dd\theta\\
 &\qquad\leq \e^{-1-k}
 \int_{\To-1}^{\To}\P\left(\ell+2+k<F_\theta(V_\theta)\leq\ell+3+k\right)\,\dd\theta\\
 &\qquad=\e^{-1-k}
 \int_{t_a}^{t_a+1}\A_t((\ell+2+k,\ell+3+k])\,\dd t \lesssim\frac{\e^{-1-k}}{\ell+2+k},
\end{align*}
where the last step is another application of Lemma~\ref{lem:profile}.
Summing over $k\geq0$ and using \eqref{eq:inactive-near}, we obtain
\begin{equation}\label{eq:inactive-average}
 \int_{\To-1}^{\To}\overline{A}_\theta(\ell)\,\dd\theta
 \lesssim\frac1\ell
 \lesssim\frac{K_a}{\sqrt\ell}.
\end{equation}

\medskip
\noindent\emph{Step 3: closing the recurrence.}
For each fixed $\theta$, the events $\mathcal E_\theta$ and
$\mathcal E_\theta^c$ partition the probability space.  Intersecting this
partition with the terminal band $(\ell,\ell+1]$ and using
$V_{\To}\sim\nu_{t_a}$ gives
\begin{equation}\label{eq:active-inactive-decomposition}
 \A_{t_a}((\ell,\ell+1])
 =A_\theta(\ell)+\overline{A}_\theta(\ell) \text{ and }
 A_\theta(\ell+j)
 \leq\A_{t_a}((\ell+j,\ell+j+1]),
 \text{ for all } j\geq0.
\end{equation}
Combining \eqref{eq:active-inactive-decomposition} with
Lemma~\ref{lem:band-contraction-summary}, we obtain, for every $\theta\in[\To-1,\To)$,
\[
 \A_{t_a}((\ell,\ell+1])
 \leq\overline{A}_\theta(\ell)
 +\frac{c_0}{1-c_0}\sum_{j\geq1}\e^{-j}
   \A_{t_a}((\ell+j,\ell+j+1])
 +\frac{2}{1-c_0}D_{\mathcal E_\theta}.
\]
Averaging over $\theta\in[\To-1,\To]$ and applying
\eqref{eq:active-discrepancy-average} and
\eqref{eq:inactive-average} yields
\begin{equation}\label{eq:band-recurrence}
 \A_{t_a}((\ell,\ell+1])
 \leq C\frac{K_a}{\sqrt\ell}
 +\frac{c_0}{1-c_0}\sum_{j\geq1}\e^{-j}
   \A_{t_a}((\ell+j,\ell+j+1]),
\end{equation}
where $C$ is a universal constant.  The total mass of the future-band operator is
\[
 \varrho_*=\frac{c_0}{1-c_0}\sum_{j\geq1}\e^{-j}
 =\frac{\e^{1-\alpha}}{(1-\e^{1-\alpha})(\e-1)}<1.
\]
Notice that $\varrho_* < 1$ and $\bar\delta < 1$ are the  constraints on the choice of $\alpha$ and $\alpha=5$ suffices.
For fixed positive $f$ on $G$, $\log f_{t_a}$ is bounded above, so
$\mathcal M_\ell:=\sup_{r\geq\ell}\sqrt r\,\A_{t_a}((r,r+1])<\infty$.
By \eqref{eq:band-recurrence}, we have
\begin{align*}
 \sqrt r\,\A_{t_a}((r,r+1])
 &\leq C K_a+\frac{c_0}{1-c_0}\sum_{j\geq1}\e^{-j}
   \sqrt r\,\A_{t_a}((r+j,r+j+1])\leq C K_a+\varrho_*\mathcal M_\ell.
\end{align*}
Taking the supremum over $r\geq\ell$ yields
$\mathcal M_\ell\leq C K_a+\varrho_*\mathcal M_\ell$.
Therefore $\mathcal M_\ell\lesssim K_a$.  This proves
\eqref{eq:fixed-band} for $\ell\geq C K_a^2$.

If $0<\ell<C K_a^2$, then
$\A_{t_a}((\ell,\ell+1])\leq1$ and
$1\lesssim K_a/\sqrt\ell$.  This completes the proof for every
$\ell>0$.
\end{proof}

\section{Power coupling and switched power supermartingale}
\label{sec:power-coupling}

In this section, we construct the power coupling $(V_t,W_t)$ and prove Lemmas~\ref{lem:discrepancy-summary} -- \ref{lem:score-summary}.

Condition on $\F_\theta$, so that $R_\theta$ and $\bar\delta$ are fixed.  For
$t\leq\To$, one has $T-t\geq t_a$. Recall that $Y_i(t,x)$ and $S_i(t,x)$ are defined in \eqref{eq:reverse-heat-process}. 

Starting from $W_\theta=V_\theta$, for $t\in[\theta,\To]$,
let
\begin{equation}\label{eq:W-SDE}
  \dd W_t=\sum_{i=1}^n(-2W_{t-}^{(i)}e_i)
 \int_0^\infty
 \1_{\{0<z\leq Y_i(t,V_{t-})/2+
       \delta_i(t,V_{t-})S_i(t,V_{t-})\1_{\{t\leq\st\}}\}}
 N^{[i]}(\dd t,\dd z).
\end{equation}
The process $\1_{\{t\leq\st\}}$ is always understood in its
left-continuous predictable version, so a jump crossing the barrier is
governed by the pre-stopping rates. Recall that $\bar\delta$ is defined in \eqref{eq:R-theta-definition} and $\bar\delta\in [0,1)$. We consider the ratio via power splitting
\[
 \delta_i(t,x)=
 \begin{cases}
 \displaystyle\frac{1-Y_i(t,x)^{\bar\delta}}{1-Y_i(t,x)},
     &Y_i(t,x)<1,\\[3mm]
 \bar\delta,&Y_i(t,x)=1,\\[2mm]
 \displaystyle\frac{Y_i(t,x)-Y_i(t,x)^{1-\bar\delta}}
 {Y_i(t,x)-1},&Y_i(t,x)>1.
 \end{cases}
\]
This is different from the ratio in \citet[Eq.~(18)]{Chen2026}. Our power coupling has the following interpretation.
 If $Y_i<1$, the $V$- and $W$-coordinates flip together at rate $Y_i/2$,
and $W$ has an additional flip at rate $(1-Y_i^{\bar\delta})/2$.  If
$Y_i\geq1$, their common rate is $Y_i^{1-\bar\delta}/2$, and $V$ has an
additional flip at rate $(Y_i-Y_i^{1-\bar\delta})/2$.  After $\st$, the two
coordinates flip synchronously at rate $Y_i/2$.  When
$\mathcal E_\theta$ fails, $\bar\delta=0$ and the construction reduces to
$W=V$. 
\citet[Lemma~5]{Chen2026} gives
\begin{equation}\label{eq:edge-bound}
 \kappa_a^{-1}\leq Y_i(t,x)\leq\kappa_a.
\end{equation}
Therefore, every coordinate
rate is bounded by a constant depending only on $a$.

For a function $h=h(x,y)$, write
$\Delta_i^yh(x,y)=h(x,\sigma_i y)-h(x,y)$ and
$\Delta_i^{xy}h(x,y)=h(\sigma_i x,\sigma_i y)-h(x,y)$.
The predictable generator is
\begin{align}
&\mathcal G_t=\overline{\mathcal L}_t^0+ \1_{\{t\leq\st\}}\mathcal B_t,
\label{eq:power-joint-generator}\\
&\overline{\mathcal L}_t^0h(x,y) =\frac12\sum_iY_i(t,x)\Delta_i^{xy}h(x,y)
\nonumber\\
&\mathcal B_th(x,y)
 =\sum_{S_i(t,x)>0}\delta_i(t,x)S_i(t,x)\Delta_i^yh(x,y)
 +\sum_{S_i(t,x)\leq0}\delta_i(t,x)S_i(t,x)
       \Delta_i^yh(\sigma_i x,y).
\label{eq:power-perturbation-generator}
\end{align}
The generator decomposition also implies that the $V$-coordinate retains its
unperturbed reverse transition law in the joint filtration.  This is stated
precisely in Lemma~\ref{lem:joint-filtration} and proved in
Section~\ref{subsec:joint-filtration-technical}.

\subsection{Localized total variation distance}\label{sec:discrepancy}

This section proves Lemma~\ref{lem:discrepancy-summary}.  The argument is the
localized Duhamel and Boolean-bridge method of
\citet[Section~3]{XiangZhang2026}, based on the bridge identities of
\citet[Lemmas~9--11]{Chen2026}.  The difference is that the perturbation
coefficients are generated by the power splitting rather than by the
one in \citet{Chen2026}.  

For $\zeta\in G$, define
\[
 H_t^\zeta(x)=\P(V_T=\zeta\mid V_t=x).
\]
By Lemma~\ref{lem:joint-filtration}, this terminal likelihood remains valid
in the enlarged filtration:
\begin{equation}\label{eq:joint-terminal-likelihood}
 H_t^\zeta(V_t)=\P(V_T=\zeta\mid\F_t),
 \qquad \theta\leq t\leq\To.
\end{equation}
Thus $H_t^\zeta(V_t)$ is the likelihood martingale used to condition the
joint process $(V,W)$ on $V_T=\zeta$.  Put
\[
 r_{t,i}^\zeta(x)=\frac{H_t^\zeta(\sigma_i x)}{H_t^\zeta(x)},
 \quad
 \lambda_{t,i}^\zeta(x)=
 \frac{1-\rho_tx_i\zeta_i}{1+\rho_tx_i\zeta_i},
 \text{ where } \rho_t=\e^{-(T-t)}.
\]
Then, we have the following relationships (see Section~\ref{sec:bridge-identities}):
\begin{equation}\label{eq:bridge-r-lambda}
 r_{t,i}^\zeta(x)Y_i(t,x)=\lambda_{t,i}^\zeta(x),
 \qquad
 \kappa_a^{-1}\leq\lambda_{t,i}^\zeta(x)\leq\kappa_a.
\end{equation}

Following \citet[Lemmas~9--11]{Chen2026}, let
\[
 \gamma_t=\e^{-(\To-t)},\qquad
 a_t=\frac{\gamma_t(1-a^2)}{1-a^2\gamma_t^2},
 \qquad
 b_t=\frac{a(1-\gamma_t^2)}{1-a^2\gamma_t^2},
\]
and, for $x,y,\zeta\in G$, set
$m_t^{[i]}(x,y,\zeta)=a_ty_i+b_tx_iy_i\zeta_i$.  If
$\phi:G\to\{0,1\}$ is identified with its multilinear extension, put
$q_t^\zeta(x,y)=\phi(m_t(x,y,\zeta))$.

We use two technical lemmas, proved in Sections~\ref{sec:bridge-identities} and~\ref{sec:conditioned-power} respectively.

\begin{lemma}[Boolean bridge identities]\label{lem:bridge-identities}
For $t<\To$, we have
\begin{align}
 \Delta_i^yq_t^\zeta(x,y)
 &=-2(a_ty_i+b_tx_iy_i\zeta_i)\partial_i\phi(m_t),
 \label{eq:bridge-diff-plus}\\
 \Delta_i^yq_t^\zeta(\sigma_i x,y)
 &=-2(a_ty_i-b_tx_iy_i\zeta_i)\partial_i\phi(m_t).
 \label{eq:bridge-diff-minus}
\end{align}
Moreover, we have the following control on the $b_t$:
\begin{equation}\label{eq:bridge-b-control}
 \lambda_{t,i}^\zeta(x)b_t^2
 \leq\frac{a^2}{1-a^2}\big(1-(m_t^{[i]}(x,y,\zeta))^2\big).
\end{equation}
Under the conditioned synchronized generator
$\mathcal L_t^{0,\zeta}h=\frac12\sum_i\lambda_{t,i}^\zeta\Delta_i^{xy}h$,
$q_t^\zeta$ is space-time harmonic and we have
\begin{equation}\label{eq:bridge-square-energy}
 (\partial_t+\mathcal L_t^{0,\zeta})(q_t^\zeta)^2
 =2a_t^2\sum_i\lambda_{t,i}^\zeta
          |\partial_i\phi(m_t)|^2.
\end{equation}
\end{lemma}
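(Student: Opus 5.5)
The plan is to verify each identity directly from the multilinear structure of $\phi$ and the explicit form of $a_t$, $b_t$, $\rho_t$. The basic tool is that a multilinear function is affine in each coordinate separately, and its partial derivative $\partial_i\phi$ does not depend on the $i$th coordinate. Hence, if $m'$ differs from $m$ only in coordinate $i$, then
\[
\phi(m')-\phi(m)=(m'_i-m_i)\,\partial_i\phi(m),
\]
and $\partial_i\phi(m')=\partial_i\phi(m)$.

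\emph{Difference identities.} Since $y_i^2=1$, flipping $y_i$ sends $m_t^{[i]}=a_ty_i+b_tx_iy_i\zeta_i$ to $-m_t^{[i]}$ and leaves the other coordinates of $m_t$ unchanged. This gives \eqref{eq:bridge-diff-plus}. At the point $\sigma_ix$, the $i$th coordinate is $a_ty_i-b_tx_iy_i\zeta_i$, and flipping $y_i$ negates it. The other coordinates are again unchanged, so $\partial_i\phi$ is evaluated at the same value as at $m_t(x,y,\zeta)$. This gives \eqref{eq:bridge-diff-minus}.

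\emph{The bound \eqref{eq:bridge-b-control}.} First note that $\rho_t=\e^{-(T-t)}=\e^{-t_a}\e^{-(\To-t)}=a\gamma_t$. Write $g=\gamma_t$, $D=1-a^2g^2$, and $s=x_i\zeta_i\in\{\pm1\}$. Then $\lambda_{t,i}^\zeta=(1-ags)/(1+ags)$. Direct algebra gives
\[
a_t+b_t=\frac{g+a}{1+ag},\qquad a_t-b_t=\frac{g-a}{1-ag},
\]
so that
\[
1-(m_t^{[i]})^2=1-(a_t+sb_t)^2=\frac{(1-a^2)(1-g^2)}{(1+sag)^2}.
\]
Also $b_t^2=a^2(1-g^2)^2/\bigl((1-ag)^2(1+ag)^2\bigr)$. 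Take $s=1$. After cancellation, the desired inequality becomes $1-g^2\le(1-ag)(1+ag)=1-a^2g^2$, which holds since $a<1$. The case $s=-1$ is symmetric.

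\emph{Harmonicity and square energy.} Under a synchronized flip of $(x_i,y_i)$, the $i$th coordinate of $m_t$ becomes $-a_ty_i+b_tx_iy_i\zeta_i$. Hence $\Delta_i^{xy}q_t^\zeta=-2a_ty_i\,\partial_i\phi(m_t)$. By multilinearity the chain rule is exact:
\[
\partial_tq_t^\zeta=\sum_i(\dot a_ty_i+\dot b_tx_iy_i\zeta_i)\,\partial_i\phi(m_t).
\]
So space-time harmonicity reduces to the coordinatewise identity $\dot a_t+s\dot b_t=a_t\lambda_s$ for both $s=\pm1$. Equivalently,
\[
\dot a_t=\tfrac{a_t}{2}(\lambda_++\lambda_-),\qquad \dot b_t=\tfrac{a_t}{2}(\lambda_+-\lambda_-).
\]
Using $\dot g=g$, we get $\dot a_t=(1-a^2)g(1+a^2g^2)/D^2$ and $\dot b_t=-2ag^2(1-a^2)/D^2$. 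On the other side, $(\lambda_++\lambda_-)/2=(1+a^2g^2)/D$ and $(\lambda_+-\lambda_-)/2=-2ag/D$. Multiplying by $a_t=g(1-a^2)/D$ matches both expressions.

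Finally, for any jump generator the carré-du-champ identity reads
\[
\mathcal L(h^2)-2h\mathcal Lh=\tfrac12\sum_i\lambda_{t,i}^\zeta(\Delta_i^{xy}h)^2.
\]
Combining this with harmonicity gives
\[
(\partial_t+\mathcal L_t^{0,\zeta})(q_t^\zeta)^2=\tfrac12\sum_i\lambda_{t,i}^\zeta\cdot4a_t^2|\partial_i\phi(m_t)|^2,
\]
which is \eqref{eq:bridge-square-energy}.

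The only step requiring care is the ODE bookkeeping for $a_t$ and $b_t$: the relation $\rho_t=a\gamma_t$, the factor $\dot g=g$, and the sign conventions $\lambda_\pm$ for $s=\pm1$. Everything else is a direct consequence of multilinearity.
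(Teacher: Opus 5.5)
Your proof is correct. For the difference identities, the bound \eqref{eq:bridge-b-control}, and the square-energy identity you argue as the paper does: multilinearity, direct substitution using $\rho_t=a\gamma_t$, and the carr\'e-du-champ formula.

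The genuine difference is in space-time harmonicity. You verify $(\partial_t+\mathcal L_t^{0,\zeta})q_t^\zeta=0$ analytically. Using the chain rule and $\Delta_i^{xy}q_t^\zeta=-2a_ty_i\partial_i\phi(m_t)$, you reduce it to the coefficient identities $\dot a_t+s\dot b_t=a_t\lambda_s$ for $s=\pm1$, and you check these explicitly.

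The paper instead obtains harmonicity probabilistically. It first derives the bridge representation \eqref{eq:q-conditional-representation}. This uses Bayes' formula, the coordinatewise factorization of the two heat kernels, and the conservation of $x_iy_i$ under synchronized flips. It then notes that a conditional terminal expectation under the bridge, whose generator is $\mathcal L_t^{0,\zeta}$ by the Doob transform, is automatically harmonic.

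Your route is shorter and self-contained. It needs neither the Doob-transform identification of the bridge generator nor conditional independence, but it treats $a_t,b_t$ as given formulas. The paper's route explains where those coefficients come from. It also yields the representation itself, which is used later through \eqref{eq:bridge-starting-identity} in the localized Duhamel argument. You could recover that representation from your harmonicity together with the terminal value $q_{\To}^\zeta(x,y)=\phi(y)$ (since $a_{\To}=1$, $b_{\To}=0$) and Dynkin's formula for the conditioned process. That step, however, again requires knowing the conditioned generator.
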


\begin{lemma}[Conditioned power coefficient]\label{lem:conditioned-power}
Under $\P^\zeta=\P(\,\cdot\mid V_T=\zeta)$, the predictable generator on
$[\theta,\To]$ is
$\mathcal L_t^{0,\zeta}+\1_{\{t\leq\st\}}\mathcal B_t^\zeta$, where
\begin{equation}\label{eq:conditioned-power-generator}
 \mathcal B_t^\zeta h(x,y)
 =\sum_{S_i(t,x)>0}\delta_i(t,x)S_i(t,x)\Delta_i^yh(x,y)
 +\sum_{S_i(t,x)\leq0}r_{t,i}^\zeta(x)\delta_i(t,x)S_i(t,x)
       \Delta_i^yh(\sigma_i x,y).
\end{equation}
The power coefficients satisfy
\begin{equation}\label{eq:conditioned-power-bound}
 \left|\delta_i(t,x)\left(
 \1_{\{S_i(t,x)>0\}}+r_{t,i}^\zeta(x)\1_{\{S_i(t,x)\leq0\}}
 \right)\right|^2
 \leq\kappa_a\Lambda_a^2\bar\delta^2\lambda_{t,i}^\zeta(x).
\end{equation}
\end{lemma}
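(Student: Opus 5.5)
The plan has two parts. First, obtain \eqref{eq:conditioned-power-generator} as a Doob $h$-transform of the joint generator \eqref{eq:power-joint-generator} by the likelihood $H_t^\zeta(x)$. Second, prove \eqref{eq:conditioned-power-bound} by an elementary case analysis on the size of $Y_i$.

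\emph{Generator.} By \eqref{eq:joint-terminal-likelihood}, which rests on Lemma~\ref{lem:joint-filtration}, $H_t^\zeta(V_t)$ is a positive $(\F_t)$-martingale for the joint process on $[\theta,\To]$. It therefore defines $\P^\zeta$ on $\F_{\To}$ by a change of measure. Since $H^\zeta_t$ depends only on the $x$-coordinate, the standard $h$-transform rule for pure-jump processes gives the following. Every jump that flips $x^{(i)}$ has its rate multiplied by $r_{t,i}^\zeta(x)=H_t^\zeta(\sigma_i x)/H_t^\zeta(x)$. Jumps flipping only $y^{(i)}$ keep their rate. The predictable indicator $\1_{\{t\le\st\}}$ and $\bar\delta$ are unaffected, since they are $\F_\theta$-measurable or predictable.

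I then read off the jump types from \eqref{eq:power-perturbation-generator}.
\begin{itemize}
\item If $S_i>0$, the term $\delta_iS_i\Delta_i^yh(x,y)$ is a $y$-only flip, so it is unchanged. The synchronized part $\tfrac12 Y_i\Delta_i^{xy}h$ becomes $\tfrac12 r_{t,i}^\zeta Y_i\Delta_i^{xy}h=\tfrac12\lambda_{t,i}^\zeta\Delta_i^{xy}h$ by \eqref{eq:bridge-r-lambda}.
\item If $S_i\le0$, I rewrite
\[
\tfrac12Y_i\Delta_i^{xy}h+\delta_iS_i\Delta_i^yh(\sigma_ix,y)
=(\tfrac12Y_i+\delta_iS_i)\Delta_i^{xy}h+(-\delta_iS_i)\{h(\sigma_ix,y)-h(x,y)\}.
\]
This exhibits a synchronized flip at rate $Y_i^{1-\bar\delta}/2\ge0$ and an $x$-only flip at rate $-\delta_iS_i\ge0$. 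Both flips change $x$, so both rates acquire the factor $r_{t,i}^\zeta$. Recombining the two pieces gives $\tfrac12\lambda_{t,i}^\zeta\Delta_i^{xy}h+r_{t,i}^\zeta\delta_iS_i\Delta_i^yh(\sigma_ix,y)$, which is exactly \eqref{eq:conditioned-power-generator}.
\end{itemize}

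\emph{Coefficient bound.} Throughout I use \eqref{eq:edge-bound}, \eqref{eq:bridge-r-lambda} and the inequality $1-\e^{-s}\le s$.
\begin{itemize}
\item Case $Y<1$. Here $1-Y^{\bar\delta}\le\bar\delta\log(1/Y)$, so $\delta_i\le\bar\delta\,\log(1/Y)/(1-Y)$. The function $\log(1/Y)/(1-Y)$ is decreasing in $Y$ on $[\kappa_a^{-1},1)$. Its maximum is therefore at $Y=\kappa_a^{-1}$, where it equals $\kappa_a\log\kappa_a/(\kappa_a-1)=\Lambda_a$. Hence $\delta_i^2\le\Lambda_a^2\bar\delta^2\le\kappa_a\Lambda_a^2\bar\delta^2\lambda_{t,i}^\zeta$, using $\lambda_{t,i}^\zeta\ge\kappa_a^{-1}$.
\item Case $Y>1$. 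Here $\delta_i=Y(1-Y^{-\bar\delta})/(Y-1)\le\bar\delta\,Y\log Y/(Y-1)$. The function $Y\log Y/(Y-1)$ is increasing, so on $(1,\kappa_a]$ it is at most $\Lambda_a$. Since $r_{t,i}^\zeta=\lambda_{t,i}^\zeta/Y$ with $Y\ge1$ and $\lambda_{t,i}^\zeta\le\kappa_a$, I get $(r_{t,i}^\zeta\delta_i)^2\le\Lambda_a^2\bar\delta^2(\lambda_{t,i}^\zeta)^2\le\kappa_a\Lambda_a^2\bar\delta^2\lambda_{t,i}^\zeta$.
\item Case $Y=1$. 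Here $\delta_i=\bar\delta$ and $r_{t,i}^\zeta=\lambda_{t,i}^\zeta$, which gives $(\lambda_{t,i}^\zeta)^2\bar\delta^2\le\kappa_a\lambda_{t,i}^\zeta\bar\delta^2$, and $\Lambda_a\ge1$ covers it.
\end{itemize}
Finally, $S_i\le0$ is equivalent to $Y_i\ge1$, so these three cases match the indicators in \eqref{eq:conditioned-power-bound}.

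\emph{Main obstacle.} The only delicate point is justifying the $h$-transform in the joint filtration rather than the filtration of $V$ alone. This requires that $V$ retain its unperturbed reverse law given the full history of $(V,W)$. That fact is precisely Lemma~\ref{lem:joint-filtration}; I would invoke it, together with the boundedness of all rates from \eqref{eq:edge-bound}, so that the Girsanov-type compensator computation for the Poisson-driven jumps is rigorous.
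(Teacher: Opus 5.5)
Your proposal is correct and follows the paper's route. Like the paper, you take a Doob transform by the joint-filtration likelihood $H_t^\zeta(V_t)$, where you usefully make explicit the split of the $S_i\le0$ term into a synchronized flip at rate $Y_i^{1-\bar\delta}/2$ and an $x$-only flip at rate $-\delta_iS_i$, both reweighted by $r_{t,i}^\zeta$. You then use the same three-case bound via $0\le\delta_i\le\Lambda_a\bar\delta$; the nonnegativity is needed before squaring and is immediate from $\bar\delta\in[0,1)$.

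The only difference is in the case $Y_i>1$. There the paper uses the cancellation $r_{t,i}^\zeta\delta_i=\lambda_{t,i}^\zeta(1-Y_i^{-\bar\delta})/(Y_i-1)\le\bar\delta\lambda_{t,i}^\zeta$, which avoids the $\Lambda_a$ factor that your separate bounds $r_{t,i}^\zeta\le\lambda_{t,i}^\zeta$ and $\delta_i\le\Lambda_a\bar\delta$ incur, but both suffice for the stated estimate.
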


\begin{proof}[Proof of Lemma~\ref{lem:discrepancy-summary}]
Fix an indicator $\phi:G\to\{0,1\}$ and define
\[
 U_t(x,y)=\sum_{\zeta\in G}H_t^\zeta(x)q_t^\zeta(x,y).
\]
For each $\zeta$, the likelihood $H_t^\zeta$ satisfies
$(\partial_t+\widetilde{\mathcal L}_t)H_t^\zeta=0$, and
$H_t^\zeta(\sigma_i x)=r_{t,i}^\zeta(x)H_t^\zeta(x)$.  Using
$r_{t,i}^\zeta Y_i=\lambda_{t,i}^\zeta$ and the harmonicity of
$q_t^\zeta$ in Lemma~\ref{lem:bridge-identities}, the product rule for
the synchronized jump generator gives
\begin{align*}
 &(\partial_t+\overline{\mathcal L}_t^0)
    \{H_t^\zeta(x)q_t^\zeta(x,y)\}=q_t^\zeta(x,y)
    (\partial_t+\widetilde{\mathcal L}_t)H_t^\zeta(x)
   +H_t^\zeta(x)
    (\partial_t+\mathcal L_t^{0,\zeta})q_t^\zeta(x,y)=0.
\end{align*}
Summing over $\zeta$, using 
$\sum_\zeta H_t^\zeta=1$, yields
$(\partial_t+\overline{\mathcal L}_t^0)U_t=0$ and
$U_{\To}(x,y)=\phi(y)$.

The synchronized bridge representation also gives, for every $x$ and
$\zeta$,
\begin{equation}\label{eq:bridge-starting-identity}
 q_\theta^\zeta(x,x)
 =\E[\phi(V_{\To})\mid V_\theta=x,V_T=\zeta].
\end{equation}
Since $W_\theta=V_\theta$, Bayes' formula and
Lemma~\ref{lem:joint-filtration} give, almost surely,
\[
 \E[\phi(V_{\To})\mid\F_\theta,V_T=\zeta]
 =\frac{P^V_{\theta,\To}(\phi H_{\To}^\zeta)(V_\theta)}
        {H_\theta^\zeta(V_\theta)}
 =q_\theta^\zeta(V_\theta,V_\theta),
\]
where the final equality is \eqref{eq:bridge-starting-identity}; the
denominator is positive under the standing assumption.  Together with
\eqref{eq:joint-terminal-likelihood} and the tower property, this gives
$
 U_\theta(V_\theta,W_\theta)
 =\E[\phi(V_{\To})\mid\F_\theta].
$
Dynkin's formula therefore yields the localized Duhamel identity
\begin{equation}\label{eq:localized-Duhamel-power}
 \E[\1_A\{\phi(W_{\To})-\phi(V_{\To})\}]
 =\E\left[\1_A\int_\theta^\st
       \mathcal B_tU_t(V_{t-},W_{t-})\,\dd t\right].
\end{equation}
This is the localization step of \citet{XiangZhang2026}; the multiplier
$\1_A$ may be inserted because $A\in\F_\theta$.

By Lemma~\ref{lem:conditioned-power},
$\mathcal B_tU_t=\sum_\zeta H_t^\zeta\mathcal B_t^\zeta q_t^\zeta$.
For fixed $\zeta$, let
\[
 c_{t,i}^\zeta=
 \begin{cases}
  a_ty_i+b_tx_iy_i\zeta_i,&S_i(t,x)>0,\\
  a_ty_i-b_tx_iy_i\zeta_i,&S_i(t,x)\leq0.
 \end{cases}
\]
Then
$|c_{t,i}^\zeta|^2\leq2(a_t^2+b_t^2)$.  Equations
\eqref{eq:bridge-diff-plus}--\eqref{eq:bridge-diff-minus} and
\eqref{eq:conditioned-power-bound} give, by Cauchy--Schwarz in $i$,
\begin{align*}
 |\mathcal B_t^\zeta q_t^\zeta(x,y)|
 &\leq2\sum_i|S_i(t,x)|
 \left|\delta_i(t,x)\left(
 \1_{\{S_i(t,x)>0\}}+r_{t,i}^\zeta(x)
 \1_{\{S_i(t,x)\leq0\}}\right)\right|
 |c_{t,i}^\zeta|\,|\partial_i\phi(m_t)|\\
 &\leq2\sqrt{2\kappa_a}\,\Lambda_a
 \left(\bar\delta^2\sum_iS_i(t,x)^2\right)^{1/2}
 \left(\sum_i\lambda_{t,i}^\zeta(x)(a_t^2+b_t^2)
 |\partial_i\phi(m_t)|^2\right)^{1/2}.
\end{align*}
A second Cauchy--Schwarz inequality in $\zeta$, with weights
$H_t^\zeta(x)$, yields
\begin{equation}\label{eq:pointwise-discrepancy-power}
 |\mathcal B_tU_t(x,y)|
 \lesssim\sqrt{\kappa_a}\,\Lambda_a
 \left(\bar\delta^2\sum_iS_i(t,x)^2\right)^{1/2}
 \Gamma_t(x,y)^{1/2}, \text{ where }
\end{equation}
\[
 \Gamma_t(x,y)=\sum_\zeta H_t^\zeta(x)
 \sum_i\lambda_{t,i}^\zeta(x)(a_t^2+b_t^2)
       |\partial_i\phi(m_t(x,y,\zeta))|^2.
\]
Thus \eqref{eq:localized-Duhamel-power} and Cauchy--Schwarz give
\begin{equation}\label{eq:D-via-power-energy}
 \left|\E[\1_A\{\phi(W_{\To})-\phi(V_{\To})\}]\right|
 \lesssim\sqrt{\kappa_a}\,\Lambda_a
 \cS_A^{1/2}Y_A^{1/2},
\end{equation}
where
$Y_A=\E[\1_A\int_\theta^{\To}\Gamma_t(V_t,W_t)\,\dd t]$.

Conditioning on $V_T$, split $Y_A=\Psi_a^A+\Psi_b^A$, where
\begin{align*}
 \Psi_a^A&=\E\left[\1_A\int_\theta^{\To}a_t^2
 \sum_i\lambda_{t,i}^{V_T}(V_t)
 |\partial_i\phi(m_t(V_t,W_t,V_T))|^2\,\dd t\right],\\
 \Psi_b^A&=\E\left[\1_A\int_\theta^{\To}b_t^2
 \sum_i\lambda_{t,i}^{V_T}(V_t)
 |\partial_i\phi(m_t(V_t,W_t,V_T))|^2\,\dd t\right].
\end{align*}
By Parseval's inequality for the first-level coefficients under the product measure with
coordinate means $z_i$ (see \citet[Chapter~8]{ODonnell2014} and also
\citet[Lemma~7]{Chen2026}), we have
\begin{equation}\label{eq:biased-level-one-used}
 \sum_i(1-z_i^2)|\partial_i\phi(z)|^2
 \leq\phi(z)-\phi(z)^2\leq\frac14.
\end{equation}
We apply \eqref{eq:biased-level-one-used} at
$z=m_t(V_t,W_t,V_T)\in[-1,1]^n$ for $t<\To$; the endpoint case follows by continuity.  Together with
\eqref{eq:bridge-b-control} and $\To-\theta\leq1$, this gives
\begin{equation}\label{eq:Psi-b-power}
 \Psi_b^A\lesssim\frac{a^2}{1-a^2}\P(A)
 \lesssim\kappa_a\P(A).
\end{equation}

For $0<\varepsilon<\To-\theta$, define
\begin{align*}
 \Psi_{a,\varepsilon}^A
 &=\E\left[\1_A\int_\theta^{\To-\varepsilon}a_t^2
 \sum_i\lambda_{t,i}^{V_T}(V_t)
 |\partial_i\phi(m_t(V_t,W_t,V_T))|^2\,\dd t\right],\\
 Y_{A,\varepsilon}
 &=\E\left[\1_A\int_\theta^{\To-\varepsilon}
 \Gamma_t(V_t,W_t)\,\dd t\right].
\end{align*}
Fix $\zeta\in G$ and write $\P^\zeta=\P(\,\cdot\mid V_T=\zeta)$.
The standing positivity assumption gives $\P(V_T=\zeta)>0$.  Under
$\P^\zeta$, apply Dynkin's formula on
$[\theta,\To-\varepsilon]$ to $(q_t^\zeta)^2$ with generator
$\mathcal L_t^{0,\zeta}+
\1_{\{t\leq\st\}}\mathcal B_t^\zeta$.  Since $A\in\F_\theta$, the
resulting martingale identity may be multiplied by $\1_A$.  Using
\eqref{eq:bridge-square-energy}, then multiplying by
$\P(V_T=\zeta)$ and summing over $\zeta$, gives
\begin{align}
 2\Psi_{a,\varepsilon}^A
 &=\E[\1_A(q_{\To-\varepsilon}^{V_T})^2]
   -\E[\1_A(q_\theta^{V_T})^2] -
 \E\left[\1_A\int_\theta^{\To-\varepsilon}
 \1_{\{t\leq\st\}}\mathcal B_t^{V_T}
       (q_t^{V_T})^2\,\dd t\right].
 \label{eq:Psi-a-Dynkin}
\end{align}
Here and below the state arguments are suppressed.  Since
$0\leq q_t^\zeta\leq1$, the first two terms on the right-hand side of
\eqref{eq:Psi-a-Dynkin} contribute at most $\P(A)$.  Moreover,
$|\Delta(q^2)|\leq2|\Delta q|$.  The coefficient allocation used in
\eqref{eq:pointwise-discrepancy-power}, followed by Cauchy--Schwarz,
therefore gives
\[
 \Psi_{a,\varepsilon}^A
 \lesssim\P(A)+
 \sqrt{\kappa_a}\,\Lambda_a
 \cS_A^{1/2}Y_{A,\varepsilon}^{1/2}.
\]
The integrands defining $\Psi_{a,\varepsilon}^A$ and
$Y_{A,\varepsilon}$ are nonnegative, so these quantities increase to
$\Psi_a^A$ and $Y_A$, respectively, as $\varepsilon\downarrow0$.
Furthermore, the Poisson random measures have no atom at the
deterministic time $\To$, so
$(V_{\To-\varepsilon},W_{\To-\varepsilon})\to(V_{\To},W_{\To})$
almost surely.  Thus dominated
convergence gives the convergence of the two boundary terms in
\eqref{eq:Psi-a-Dynkin}; hence no endpoint term is lost.  Letting
$\varepsilon\downarrow0$, combining with \eqref{eq:Psi-b-power}, and
applying Young's inequality yields
\begin{equation}\label{eq:power-energy-closure}
 Y_A\lesssim\kappa_a\P(A)+\kappa_a\Lambda_a^2\cS_A.
\end{equation}
Substituting \eqref{eq:power-energy-closure} into
\eqref{eq:D-via-power-energy} gives
\[
 \left|\E[\1_A\{\phi(W_{\To})-\phi(V_{\To})\}]\right|
 \lesssim
 \kappa_a\Lambda_a\sqrt{\cS_A\P(A)}
 +\kappa_a\Lambda_a^2\cS_A.
\]
Taking the supremum over indicator tests, with the total variation
convention stated after the definition of $D_A$, proves
\eqref{eq:discrepancy-summary}.
\end{proof}

\subsection{Switched power supermartingale}\label{sec:power-supermartingale}
We construct a switched power supermartingale in order to prove Lemma~\ref{lem:band-contraction-summary}  in Section~\ref{sec:band-contraction}.
For $t\in[\theta,\To]$, define the power gap
\begin{equation}\label{eq:power-gap}
 \Xi_t=
 \begin{cases}
 F_t(W_t)-(1-\bar\delta)F_t(V_t)-\bar\delta F_\theta(V_\theta),&t\leq\st,\\[1mm]
 F_t(W_t)-F_t(V_t)+\bar\delta(F_\st(V_\st)-F_\theta(V_\theta)),&t\geq\st.
 \end{cases}
\end{equation}
The two expressions agree at $t=\st$, including when the stopping barrier is
crossed by a jump.  Put $M_t=\e^{\Xi_t}$.  For a nonnegative terminal test $h:G\to[0,\infty)$, let $H$ be its
space-time harmonic extension for the unperturbed reverse process; equivalently,
$H$ is the unique bounded solution of the reverse backward equation with
terminal condition $H_{\To}=h$.

\begin{proposition}
\label{prop:power-supermartingale}
The process $M_tH_t(W_t)$ is a nonnegative supermartingale on
$[\theta,\To]$.  Consequently, for every nonnegative $h$ and every
$A\in\F_\theta$,
\begin{equation}\label{eq:power-weighted-comparison}
 \E[\1_AM_{\To}h(W_{\To})]
 \leq\E[\1_Ah(V_{\To})].
\end{equation}
\end{proposition}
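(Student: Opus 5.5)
The plan is to compute the predictable drift of $M_tH_t(W_t)$ under the joint generator $\mathcal G_t$ of \eqref{eq:power-joint-generator}, and to show that it equals a nonnegative quantity times a weighted arithmetic--geometric mean deficit, which is $\le0$. The weighted comparison \eqref{eq:power-weighted-comparison} then follows from optional stopping between $\theta$ and $\To$.

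\emph{Step 1: factor the weight.} For $t\le\st$ I write
\[
 M_tH_t(W_t)=\e^{-\bar\delta F_\theta(V_\theta)}\,\Phi_t(W_t)\,\e^{-(1-\bar\delta)F_t(V_t)},
 \qquad \Phi_t(x)=f_{T-t}(x)H_t(x),
\]
and the prefactor $\e^{-\bar\delta F_\theta(V_\theta)}$ is $\F_\theta$-measurable. I would first prove that $\Phi$ solves the backward equation of the \emph{uniform} rate-$\tfrac12$ flip process:
\[
 \partial_t\Phi_t+\tfrac12\sum_i\{\Phi_t(\sigma_ix)-\Phi_t(x)\}=0.
\]
This comes from three facts:
\begin{itemize}
\item the heat equation $\partial_tf_{T-t}(x)=\tfrac12\sum_i\{f_{T-t}(x)-f_{T-t}(\sigma_ix)\}$;
\item the backward equation $\partial_tH_t+\widetilde{\mathcal L}_tH_t=0$ from \eqref{eq:reverse-generator};
\item the identity $f_{T-t}(x)Y_i(t,x)=f_{T-t}(\sigma_ix)$.
\end{itemize}
For the $V$-factor, \eqref{eq:F-time-derivative} gives the time derivative $-(1-\bar\delta)\sum_iS_i(t,V)\,\e^{-(1-\bar\delta)F(V)}$. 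A flip of $V$ in coordinate $i$ multiplies this factor by $Y_i(t,V)^{-(1-\bar\delta)}$.

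\emph{Step 2: coordinatewise drift before $\st$.} Fix $i$, put $Y=Y_i(t,V_{t-})$ and $c=1-\bar\delta$, and collect the contribution of coordinate $i$ from the rate description after \eqref{eq:W-SDE}.

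If $Y<1$, there is a common flip at rate $Y/2$ and an extra $W$-flip at rate $(1-Y^{\bar\delta})/2$. The coefficient of $\Phi(\sigma_iW)$ is
\[
 -\tfrac12+\tfrac Y2Y^{-c}+\tfrac{1-Y^{\bar\delta}}2=0 .
\]
This is exactly the ``restored reverse jump rate''. The coefficient of $\Phi(W)$ is
\[
 \tfrac12\bigl[Y^{\bar\delta}-\bar\delta Y-(1-\bar\delta)\bigr].
\]

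If $Y\ge1$, there is a common flip at rate $Y^{c}/2$ and an extra $V$-flip at rate $(Y-Y^{c})/2$, which changes only the $V$-factor. The coefficient of $\Phi(\sigma_iW)$ is again $-\tfrac12+\tfrac12Y^cY^{-c}=0$. A short computation gives the same coefficient of $\Phi(W)$, namely $\tfrac12[Y^{\bar\delta}-\bar\delta Y-(1-\bar\delta)]$.

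By the weighted AM--GM inequality $Y^{\bar\delta}\cdot1^{1-\bar\delta}\le\bar\delta Y+(1-\bar\delta)$, this coefficient is $\le0$. Since $\Phi\ge0$ (as $h\ge0$) and the prefactors are positive, the drift is $\le0$.

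\emph{Step 3: after $\st$.} Here $\Xi_t=F_t(W_t)-F_t(V_t)+\text{const}$, and the coordinates flip synchronously at rate $Y/2$. The same computation with $c=1$ gives the coefficient $-\tfrac12+\tfrac Y2Y^{-1}=0$ for $\Phi(\sigma_iW)$ and $\tfrac12-S_i-\tfrac Y2=0$ for $\Phi(W)$. So the process is a local martingale on $[\st,\To]$.

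At $\st$ the two expressions in \eqref{eq:power-gap} agree. A jump that crosses the barrier is driven by pre-stopping rates because the indicator is predictable, so the jump is accounted for in Step 2. The state space is finite and all rates are bounded by \eqref{eq:edge-bound}. Hence $M$ is bounded on $[\theta,\To]$, the compensated processes are true martingales, and $M_tH_t(W_t)$ is a nonnegative supermartingale.

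\emph{Step 4: the comparison.} Since $W_\theta=V_\theta$, we have $\Xi_\theta=0$ and $M_\theta=1$. The supermartingale property, multiplied by $\1_A$ with $A\in\F_\theta$, gives
\[
 \E[\1_AM_{\To}h(W_{\To})]\le\E[\1_AH_\theta(V_\theta)].
\]
By Lemma~\ref{lem:joint-filtration}, $H_t(V_t)$ is an $(\F_t)$-martingale, so $\E[\1_AH_\theta(V_\theta)]=\E[\1_Ah(V_{\To})]$, which is \eqref{eq:power-weighted-comparison}.

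I expect the main obstacle to be Step 1. The rates of $W$ are driven by $Y_i(t,V)$ rather than $Y_i(t,W)$, so a naive computation leaves mismatched ratios $Y_i(t,W)$. The point is that $\Phi=f_{T-t}H$ turns the $W$-part into the uniform rate-$\tfrac12$ dynamics, in which no $Y_i(t,W)$ appears. After that, the bookkeeping in Step 2 reduces to the scalar AM--GM inequality.
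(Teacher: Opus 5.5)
Your proposal is correct and follows essentially the paper's argument. Like the paper, you compute the generator of $M_tH_t(W_t)$ coordinate by coordinate before $\st$ and find the nonpositive weighted AM--GM deficit $-\tfrac12\sum_i[(1-\bar\delta)+\bar\delta Y_i-Y_i^{\bar\delta}]$ times a nonnegative factor. You then show a zero drift after $\st$ and conclude from $M_\theta=1$, $W_\theta=V_\theta$ and Lemma~\ref{lem:joint-filtration}.

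The only difference is bookkeeping. You absorb $\e^{F_t(W_t)}$ into $\Phi_t=f_{T-t}H_t$, which solves the uniform rate-$\tfrac12$ backward equation. The paper instead keeps $H_t$ and observes that the weighted coefficient of a $W$-flip is exactly $X_i(t,W_t)/2$. Your identity is the paper's multiplied through by $f_{T-t}(W_t)$.
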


\begin{proof}
Condition on $\F_\theta$.  By \eqref{eq:F-time-derivative},
$\partial_tF_t(x)=\sum_iS_i(t,x)$.  For later reference, put
\[
 X_i(t,y)=\frac{f_{T-t}(\sigma_i y)}{f_{T-t}(y)},\qquad
 \mathcal L_t^{W,\mathrm{rev}}g(y)
 =\frac12\sum_iX_i(t,y)\{g(\sigma_i y)-g(y)\}.
\]
Thus $(\partial_t+\mathcal L_t^{W,\mathrm{rev}})H_t=0$.
The continuous part of the drift can be computed coordinate by coordinate.
Write $Y=1-2S_i(t,V_{t-})$, $X=1-2S_i(t,W_{t-})$, and
$v=(1-Y)/2$, $w=(1-X)/2$.

Before $\st$, suppose first that $Y<1$.  The continuous relative coefficient
of $M$ is $w-(1-\bar\delta)v$.  A common flip has rate $Y/2$ and multiplier
$X/Y^{1-\bar\delta}$, while a $W$-only flip has rate $(1-Y^{\bar\delta})/2$ and multiplier $X$.
The coordinate contribution to the relative drift is therefore
\[
 w-(1-\bar\delta)v+\frac Y2\left(\frac X{Y^{1-\bar\delta}}-1\right)
 +\frac{1-Y^{\bar\delta}}{2}(X-1)
 =-\frac12\big((1-\bar\delta)+\bar\delta Y-Y^{\bar\delta}\big).
\]
If $Y\geq1$, a common flip has rate $Y^{1-\bar\delta}/2$ and multiplier
$X/Y^{1-\bar\delta}$, while a $V$-only flip has rate
$(Y-Y^{1-\bar\delta})/2$ and multiplier $Y^{-(1-\bar\delta)}$.  The same simplification gives
again
\begin{equation}\label{eq:power-drift-defect}
 -\frac12\big((1-\bar\delta)+\bar\delta Y-Y^{\bar\delta}\big)\leq0,
\end{equation}
where the sign follows from
$Y^{\bar\delta}\leq(1-\bar\delta)+\bar\delta Y$, valid by
$\bar\delta\in [0,1)$.

The exponential weighting also restores the exact reverse $W$-rate.  If
$Y<1$, the weighted coefficient of a $W$-flip is
\[
 \frac Y2\frac X{Y^{1-\bar\delta}}+\frac{1-Y^{\bar\delta}}{2}X=\frac X2.
\]
If $Y\geq1$, it is $Y^{1-\bar\delta}X/(2Y^{1-\bar\delta})=X/2$.
Thus, before $\st$, the full product-generator calculation is
\begin{align}
 \frac{(\partial_t+\mathcal G_t)
        \{M_tH_t(W_t)\}}{M_t}
 &= (\partial_t+\mathcal L_t^{W,\mathrm{rev}})H_t(W_t)\notag\\
 &\quad-\frac12\sum_i
 \bigl[(1-\bar\delta)+\bar\delta Y_i(t,V_t)
       -Y_i(t,V_t)^{\bar\delta}\bigr]H_t(W_t)\leq0.
 \label{eq:full-power-product-generator}
\end{align}
The first term vanishes by the backward equation.  Notice that the
marginal $W$-rate has not been replaced by the reverse rate; rather, the
coefficient of the $W$-difference in the weighted product generator is
exactly $X_i(t,W_t)/2$.

After $\st$, a common flip has rate $Y/2$ and multiplier $X/Y$; hence the
weighted $W$-rate is again $X/2$ and
\[
 \frac{(\partial_t+\mathcal G_t)
        \{M_tH_t(W_t)\}}{M_t}
 =(\partial_t+\mathcal L_t^{W,\mathrm{rev}})H_t(W_t)=0.
\]
The predictable stopping convention and the agreement of the two
expressions in \eqref{eq:power-gap} ensure that no additional jump or
finite-variation term occurs at $\st$.

Because $f>0$ on the finite state space, $F_t$, $M_t$, and $H_t$ are
bounded on the compact time interval, and all jump rates are bounded.
The compensated Dynkin formula therefore turns the nonpositive drift in
\eqref{eq:full-power-product-generator} directly into the asserted
supermartingale property, without localization.
At time $\theta$, $M_\theta=1$ and $W_\theta=V_\theta$.  Hence the
supermartingale inequality gives
\[
 \E[\1_AM_{\To}h(W_{\To})]
 \leq \E[\1_AH_\theta(V_\theta)].
\]
By Lemma~\ref{lem:joint-filtration},
$H_\theta(V_\theta)=\E[h(V_{\To})\mid\F_\theta]$.  Since
$A\in\F_\theta$, the right-hand side equals
$\E[\1_Ah(V_{\To})]$, proving
\eqref{eq:power-weighted-comparison}.
\end{proof}

\subsection{Band contraction}\label{sec:band-contraction}

\begin{proof}[Proof of Lemma~\ref{lem:band-contraction-summary}]
On the event
\[
 \mathcal E_\theta\cap
 \{F_{\To}(W_{\To})\in(\ell+j,\ell+j+1]\}
 \cap\{F_{\To}(V_{\To})\leq\ell+1\},
\]
one has $\Xi_{\To}>\alpha+j-1$.  Indeed, if $\st=\To$, then the first line of
\eqref{eq:power-gap} gives
\[
 \Xi_{\To}>
 \ell+j-(1-\bar\delta)(\ell+1)
 -\bar\delta(\ell-R_\theta)
 =j-1+\bar\delta(R_\theta+1).
\]
If $\st<\To$, then $F_\st(V_\st)\geq\ell+1$, and the second line gives the
same lower bound.  Since
$\bar\delta(R_\theta+1)=\alpha$ on $\mathcal E_\theta$,
Proposition~\ref{prop:power-supermartingale}, applied to the indicator of
$(\ell+j,\ell+j+1]$, yields
\begin{equation}\label{eq:band-weighted-crossing}
 \P\left(\mathcal E_\theta,
       F_{\To}(W_{\To})\in(\ell+j,\ell+j+1],
       F_{\To}(V_{\To})\leq\ell+1\right)
 \leq c_0\e^{-j}A_\theta(\ell+j).
\end{equation}

Introduce only the auxiliary zeroth-band mass
\[
 B_\theta(\ell)=\P\left(\mathcal E_\theta,
                     F_{\To}(W_{\To})\in(\ell,\ell+1]\right).
\]
The case $j=0$ of \eqref{eq:band-weighted-crossing} gives
\[
 B_\theta(\ell)
 \leq c_0A_\theta(\ell)
 +\P\left(\mathcal E_\theta,
       F_{\To}(V_{\To})>\ell+1\geq F_{\To}(W_{\To})\right).
\]
Testing the half-line $(\ell+1,\infty)$ in the definition of
$D_{\mathcal E_\theta}$ bounds the last probability by
\[
 \P\left(\mathcal E_\theta,
       F_{\To}(W_{\To})>\ell+1\geq F_{\To}(V_{\To})\right)
 +D_{\mathcal E_\theta}.
\]
Partitioning the first event into the disjoint bands
$(\ell+j,\ell+j+1]$, $j\geq1$, and applying
\eqref{eq:band-weighted-crossing}, we obtain
\[
 \P\left(\mathcal E_\theta,
       F_{\To}(W_{\To})>\ell+1\geq F_{\To}(V_{\To})\right)
 \leq c_0\sum_{j\geq1}\e^{-j}A_\theta(\ell+j).
\]
Finally, testing $(\ell,\ell+1]$ in the definition of
$D_{\mathcal E_\theta}$ gives
$A_\theta(\ell)\leq B_\theta(\ell)+D_{\mathcal E_\theta}$.
Combining these inequalities proves
\eqref{eq:band-contraction-summary}.
\end{proof}

\subsection{Stopped score energy}\label{sec:stopped-score-energy}

\begin{proof}[Proof of Lemma~\ref{lem:score-summary}]
The calculation is the one used in the proof of
\citet[Lemma~8]{Chen2026}; only the stopping barrier is different.  Along the
reverse process, the jump It\^o formula gives
\begin{equation}\label{eq:score-Ito-power}
 F_t(V_t)-F_\theta(V_\theta)
 -\int_\theta^t\frac12\sum_i
 \big(Y_i\log Y_i-Y_i+1\big)\,\dd r
\end{equation}
as a martingale.  Here $Y_i=Y_i(r,V_{r-})$.  For
$Y\in[\kappa_a^{-1},\kappa_a]$, the convexity estimate used in
\citet[Eq.~(40)]{Chen2026} yields
\[
 \frac12(Y\log Y-Y+1)
 \geq\frac{\log\kappa_a}{\kappa_a-1}S_i^2.
\]

We first verify the pathwise upper bound required for every starting
state.  If $F_\theta(V_\theta)\geq\ell+1$, then $\st=\theta$ and the
increment is zero.  If
$\ell<F_\theta(V_\theta)<\ell+1$, then $R_\theta=0$ and, before the
barrier is reached, the terminal increase is at most
$1+\log\kappa_a$.  Finally, if
$F_\theta(V_\theta)\leq\ell$, then
$F_\theta(V_\theta)=\ell-R_\theta$ and the increase is at most
$R_\theta+1+\log\kappa_a$.  Here a boundary-crossing jump has upward
size at most $\log\kappa_a$ by \eqref{eq:edge-bound}.  Thus, in all
cases,
\[
 F_\st(V_\st)-F_\theta(V_\theta)
 \leq R_\theta+1+\log\kappa_a.
\]
For each finite $n$, the stopped martingale in
\eqref{eq:score-Ito-power} is square-integrable; this is used only to justify
optional stopping, and its quadratic variation is not inserted into the
estimate.  Optional stopping gives
\[
 \E\left[\left.\int_\theta^\st\sum_iS_i^2\,\dd t
 \right|\F_\theta\right]
 \leq\frac{\kappa_a-1}{\log\kappa_a}
       (R_\theta+1+\log\kappa_a),
\]
which is the first assertion.  Since
$(\kappa_a-1)/\log\kappa_a=\kappa_a/\Lambda_a$ and
$\bar\delta=\alpha/(R_\theta+1)$, multiplication by $\bar\delta^2$ gives
\[
 \bar\delta^2\frac{\kappa_a-1}{\log\kappa_a}
 (R_\theta+1+\log\kappa_a)
 \lesssim
 \frac{\kappa_a}{\Lambda_a(R_\theta+1)}
 +\frac{\kappa_a-1}{(R_\theta+1)^2}.
\]
Averaging over $A$ proves \eqref{eq:power-energy-summary}.
\end{proof}

\section{Technical Results}\label{sec:technical}

\subsection{Joint filtration and reverse marginal}
\label{subsec:joint-filtration-technical}

Let $P^V_{r,t}$ denote the transition operator of the unperturbed reverse
process from time $r$ to time $t$.

\begin{lemma}[Reverse marginal in the joint filtration]
\label{lem:joint-filtration}
For every bounded terminal test $h:G\to\R$ and every
$\theta\leq t\leq\To$,
\begin{equation}\label{eq:joint-reverse-transition}
 \E[h(V_{\To})\mid\F_t]
 =P^V_{t,\To}h(V_t).
\end{equation}
For the reverse process already defined on $[0,T]$, the increments of the
driving Poisson random measures after $\To$ are independent of
$\F_{\To}$.  Consequently, for every bounded $\Phi:G\to\R$,
\begin{equation}\label{eq:joint-continued-transition}
 \E[\Phi(V_T)\mid\F_t]
 =P^V_{t,T}\Phi(V_t),
 \qquad \theta\leq t\leq\To.
\end{equation}
In particular, if
$H_t^\zeta(x)=P^V_{t,T}\1_{\{\zeta\}}(x)$, then
\begin{equation}\label{eq:joint-terminal-state}
 \P(V_T=\zeta\mid\F_t)=H_t^\zeta(V_t).
\end{equation}
\end{lemma}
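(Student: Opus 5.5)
The plan is to exploit the fact that $V$ is an autonomous (time-inhomogeneous) Markov jump process driven by the Poisson random measures. The coupled process $W$ only reads the same noise and does not feed back into the jump rule of $V$. The filtration $(\F_t)$ is generated by the Poisson random measures on $[0,T]$ together with $V_0$. The perturbation data $R_\theta$, $\bar\delta$ and $\st$ are functionals of $V$ and of that noise, so they are $\F_t$-adapted but do not enlarge the filtration beyond $\F_t$. Hence the key point is that $V$ is an $(\F_t)$-Markov process with generator $\widetilde{\mathcal L}_t$. This holds because $V_t$ is $\F_t$-measurable, and for $s>t$ the path of $V$ on $[t,s]$ is a measurable function of $V_t$ and the restriction of $N^{[1]},\dots,N^{[n]}$ to $(t,s]\times(0,\infty)$, which is independent of $\F_t$ by the independent-increments property of Poisson random measures.

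The first step is to prove \eqref{eq:joint-reverse-transition} by a Dynkin/martingale argument. Let $u_r=P^V_{r,\To}h$, which is the unique bounded solution of $(\partial_r+\widetilde{\mathcal L}_r)u_r=0$ on $[t,\To]$ with $u_{\To}=h$. The rates $Y_i(r,x)/2$ are bounded by $\kappa_a/2$ by \eqref{eq:edge-bound} and are continuous in $r$ on the finite state space, so this solution exists and is smooth in $r$. Apply the jump Itô formula to $u_r(V_r)$, using the SDE for $V$. The compensator of $N^{[i]}(\dd r,\dd z)\1_{\{z\leq Y_i(r,V_{r-})/2\}}$ with respect to $(\F_r)$ is $\tfrac12 Y_i(r,V_{r-})\,\dd r$. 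This uses that $(\F_r)$ is exactly the filtration for which $N^{[i]}$ has compensator $\dd r\,\dd z$, and again relies on independent increments. It follows that $u_r(V_r)-u_t(V_t)$ equals a stochastic integral against the compensated measures, and that integral is a true $(\F_r)$-martingale because the integrands are bounded and the intensities are bounded. Taking $\E[\cdot\mid\F_t]$ at $r=\To$ gives \eqref{eq:joint-reverse-transition}.

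The second step extends the same argument to $[t,T]$ with terminal test $\Phi$ at time $T$. The driving measures are defined on $[0,T]$, and $V$ on $(\To,T]$ is driven by their restriction to $(\To,T]$. That restriction is independent of $\F_{\To}$, and more generally the restriction to $(t,T]$ is independent of $\F_t$. Running the Itô/Dynkin argument with $u_r=P^V_{r,T}\Phi$ on $[t,T]$ gives \eqref{eq:joint-continued-transition}. An equivalent route applies the Markov property at $\To$ and then the tower property, combined with the first step applied to the test function $P^V_{\To,T}\Phi$. Choosing $\Phi=\1_{\{\zeta\}}$ then yields \eqref{eq:joint-terminal-state}.

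The main obstacle is conceptual rather than computational. One must check that conditioning on $\F_t$ reveals nothing about the future of $V$ beyond $V_t$, even though $\F_t$ also carries $W$, $\bar\delta$ and $\st$, and $W$ uses extra marks $z$ above $Y_i/2$. The resolution is that all of these are functions of the noise up to time $t$, and the Poisson random measures on disjoint time sets are independent. I would state this independence explicitly, and verify the compensator identity on $\F_t$-generators, namely $V_0$ and the counts of $N^{[i]}$ on rectangles before $t$, before invoking Dynkin's formula.
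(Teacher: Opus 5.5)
Your proposal is correct and follows the paper's argument. Dynkin's formula makes $P^V_{t,\To}h(V_t)$ an $(\F_t)$-martingale in the filtration that also carries $W$, and independence of the post-$\To$ Poisson increments plus the tower property then gives \eqref{eq:joint-continued-transition} and \eqref{eq:joint-terminal-state}.

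The only difference is cosmetic. The paper checks the martingale property by applying the joint generator $\mathcal G_t$ to $u$ viewed as a $y$-independent function, so all $\mathcal B_t$ terms vanish. You instead apply the jump It\^o formula to $V$'s own SDE, with compensator taken relative to $(\F_t)$. If you run that directly on $[t,T]$, rate boundedness on $(\To,T]$ comes from $f>0$ on the finite cube, not from \eqref{eq:edge-bound}, which only holds for $t\leq\To$.
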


\begin{proof}
The compensated-Poisson formula applied to the SDEs for $V$ and $W$ gives
\eqref{eq:power-joint-generator}--\eqref{eq:power-perturbation-generator}.
Indeed, if $Y_i<1$, the common rate is $Y_i/2$ and the $W$-only excess is
$(1-Y_i^{\bar\delta})/2=\delta_iS_i$.  If $Y_i\geq1$, the common rate is
$Y_i^{1-\bar\delta}/2$ and the $V$-only excess is
$(Y_i-Y_i^{1-\bar\delta})/2=-\delta_iS_i$.

Let $u(t,x)=P^V_{t,\To}h(x)$.  Then
$(\partial_t+\widetilde{\mathcal L}_t)u=0$ and
$u(\To,\cdot)=h$.  Regard $u$ as a function of $(x,y)$ that is
independent of $y$.  Every $y$-difference in
\eqref{eq:power-perturbation-generator} then vanishes, while
$\overline{\mathcal L}_t^0u=\widetilde{\mathcal L}_tu$.  Therefore
\[
 (\partial_t+\mathcal G_t)u(t,V_t,W_t)=0.
\]
The time-dependent Dynkin formula, valid for the bounded predictable
coefficients here, shows directly that $u(t,V_t)$ is an
$(\F_t)$-martingale.  Evaluating it at $\To$ proves
\eqref{eq:joint-reverse-transition}; no uniqueness theorem for a
martingale problem is needed.

For $v(t,x)=P^V_{t,T}\Phi(x)$, the same calculation on
$[\theta,\To]$ gives
\[
 v(t,V_t)=\E[v(\To,V_{\To})\mid\F_t].
\]
The increments of the original driving Poisson random measures on
$(\To,T]$ are independent of $\F_{\To}$, conditionally on
$V_{\To}$, and hence
$v(\To,V_{\To})=\E[\Phi(V_T)\mid\F_{\To}]$.  The tower property proves
\eqref{eq:joint-continued-transition}; taking
$\Phi=\1_{\{\zeta\}}$ yields \eqref{eq:joint-terminal-state}.
\end{proof}

This is the joint-filtration principle isolated in
\citet[Lemma~2.1]{XiangZhang2026}; here it follows directly from the common
Poisson construction.  Equation~\eqref{eq:joint-reverse-transition} is used
in the switched power-supermartingale and localized Duhamel arguments, while
\eqref{eq:joint-terminal-state} supplies the likelihood martingale for the
Boolean bridge and conditioned-generator calculation.

\subsection{Proof of Lemma~\ref{lem:bridge-identities}}\label{sec:bridge-identities}

Bayes' formula for the reversible heat kernel gives
\[
 H_t^\zeta(x)=\P(V_T=\zeta\mid V_t=x)
 =\frac{2^{-n}f(\zeta)\prod_j(1+\rho_tx_j\zeta_j)}{f_{T-t}(x)}.
\]
Taking the ratio at $x$ and $\sigma_i x$ proves
\eqref{eq:bridge-r-lambda}.  In the conditional law of
$V_{\To}$ given $(V_t,V_T)=(x,\zeta)$, the factor $f(\zeta)$ in the
joint endpoint density cancels.  The remaining two Boolean heat kernels
factor coordinatewise; hence the coordinates of $V_{\To}$ are
conditionally independent.  A one-coordinate Bayes calculation gives
\[
 \E[V_{\To}^{(i)}\mid V_t=x,V_T=\zeta]
 =\frac{\gamma_tx_i+a\zeta_i}{1+a\gamma_tx_i\zeta_i}.
\]
Under the synchronized bridge started from $(x,y)$, the product of the two
coordinates is preserved, so
$W_{\To}^{(i)}=x_iy_iV_{\To}^{(i)}$.  Its conditional mean is therefore
\[
 \E[W_{\To}^{(i)}\mid V_t=x,W_t=y,V_T=\zeta]
 =a_ty_i+b_tx_iy_i\zeta_i=m_t^{[i]}(x,y,\zeta).
\]
Conditional independence and multilinearity now give the explicit bridge
representation
\begin{equation}\label{eq:q-conditional-representation}
 q_t^\zeta(x,y)
 =\E[\phi(W_{\To})\mid V_t=x,W_t=y,V_T=\zeta]
\end{equation}
for the synchronized continuation.  Taking $y=x$ proves
\eqref{eq:bridge-starting-identity}.
These are the Boolean bridge formulas of
\citet[Lemma~9]{Chen2026}.  Multilinearity gives
\eqref{eq:bridge-diff-plus} and \eqref{eq:bridge-diff-minus}, as well as
$\Delta_i^{xy}q_t^\zeta=-2a_ty_i\partial_i\phi(m_t)$.

A direct substitution, with $\varepsilon=x_i\zeta_i$, gives
\[
 \frac{\lambda_{t,i}^\zeta b_t^2}{1-(m_t^{[i]})^2}
 =\frac{a^2(1-\gamma_t^2)}{(1-a^2)(1-a^2\gamma_t^2)}
 \leq\frac{a^2}{1-a^2},
\]
which proves \eqref{eq:bridge-b-control}.  This is the weighted bridge estimate
used in the proof of \citet[Lemma~11]{Chen2026}.

Finally, $q_t^\zeta$ is the conditional terminal expectation under the
synchronized Boolean bridge, so
$(\partial_t+\mathcal L_t^{0,\zeta})q_t^\zeta=0$.  The finite-state
carr\'e-du-champ identity and
$\Delta_i^{xy}q_t^\zeta=-2a_ty_i\partial_i\phi(m_t)$ prove
\eqref{eq:bridge-square-energy}.

\subsection{Proof of Lemma~\ref{lem:conditioned-power}}\label{sec:conditioned-power}

Conditioning by the space-time harmonic likelihood $H_t^\zeta(V_t)$ multiplies
every jump that flips the $V$-coordinate by $r_{t,i}^\zeta$ and leaves a
$W$-only jump unchanged.  Together with
$r_{t,i}^\zeta Y_i=\lambda_{t,i}^\zeta$, this gives
\eqref{eq:conditioned-power-generator}.  This is the predictable Doob-transform
calculation of \citet[Lemma~10]{Chen2026}.

It remains to prove \eqref{eq:conditioned-power-bound}.  
We first give the following bound on the size of the power perturbation.

\begin{lemma}[Size of the power perturbation]\label{lem:power-delta-bound}
      On $\mathcal E_\theta$, one has
      $
       0\leq\delta_i(t,x)\leq\Lambda_a\bar\delta,
       \text{ for all } \theta\leq t\leq\To, x\in G.
      $
      \end{lemma}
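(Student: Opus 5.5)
The argument is a one-variable calculus estimate on the function $Y\mapsto\delta_i$, using only the edge bound \eqref{eq:edge-bound}. On $\mathcal E_\theta$ we have $\bar\delta=\alpha/(R_\theta+1)\in(0,1)$, because $R_\theta\geq 2\alpha$. Fix $t,x$, write $Y=Y_i(t,x)\in[\kappa_a^{-1},\kappa_a]$ and $\delta=\bar\delta$, and treat the two regimes $Y<1$ and $Y>1$ separately. The case $Y=1$ gives $\delta_i=\bar\delta\leq\Lambda_a\bar\delta$ at once, since $\Lambda_a\geq1$; this was noted in the proof of Proposition~\ref{prop:fixed-band}.

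\emph{Nonnegativity.} For $Y<1$ we have $Y^\delta\leq1$, so numerator and denominator of $(1-Y^\delta)/(1-Y)$ are both nonnegative. For $Y>1$ we have $Y^{1-\delta}\leq Y$, so $(Y-Y^{1-\delta})/(Y-1)\geq0$.

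\emph{Upper bound for $Y<1$.} Write $Y=e^{-s}$ with $0<s\leq\log\kappa_a$. Then
\[
 \delta_i=\frac{1-e^{-\delta s}}{1-e^{-s}}\leq\frac{\delta s}{1-e^{-s}},
\]
using $1-e^{-u}\leq u$. The map $s\mapsto s/(1-e^{-s})$ is increasing, so its value on $(0,\log\kappa_a]$ is at most $\log\kappa_a/(1-\kappa_a^{-1})=\kappa_a\log\kappa_a/(\kappa_a-1)=\Lambda_a$. Hence $\delta_i\leq\Lambda_a\bar\delta$.

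\emph{Upper bound for $Y>1$.} Write $Y=e^{s}$ with $0<s\leq\log\kappa_a$. Then
\[
 \delta_i=\frac{e^{s}(1-e^{-\delta s})}{e^{s}-1}=\frac{1-e^{-\delta s}}{1-e^{-s}},
\]
which is the same expression as before. The same bound therefore gives $\delta_i\leq\Lambda_a\bar\delta$.

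The only point needing care is the monotonicity of $g(s)=s/(1-e^{-s})$. This is equivalent to $e^{s}-1-s\geq0$, since $g'(s)$ has the sign of $1-e^{-s}-se^{-s}=e^{-s}(e^s-1-s)$. That inequality is elementary, so I expect no real obstacle.

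Finally, on $\mathcal E_\theta^c$ we have $\bar\delta=0$, so $\delta_i\equiv0$ except at $Y=1$, where $\delta_i=\bar\delta=0$ as well. The lemma is stated only on $\mathcal E_\theta$, so that case is merely consistent with it.
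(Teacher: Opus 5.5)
Your proof is correct and follows the paper's own argument: you use the bound $1-e^{-u}\leq u$ and the monotonicity of $-\log y/(1-y)$ (your $s/(1-e^{-s})$), evaluated at the edge bound $Y\in[\kappa_a^{-1},\kappa_a]$. The only cosmetic difference is that you reduce the case $Y>1$ to the $Y<1$ expression via $Y\mapsto 1/Y$, whereas the paper runs the parallel estimate with $y\log y/(y-1)$, which is the same function after your substitution.
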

      
      \begin{proof}
      Fix $i,t,x$ and write $Y=Y_i(t,x)$.  If $0<Y<1$, then
      $1-Y^{\bar\delta}\leq-\bar\delta\log Y$, and hence
      \[
       \frac{1-Y^{\bar\delta}}{1-Y}\leq
       \bar\delta\frac{-\log Y}{1-Y}\leq \bar\delta\Lambda_a.
      \]
      The last inequality follows because $y\mapsto-\log y/(1-y)$ is decreasing on
      $(0,1)$ and $Y\geq\kappa_a^{-1}$.  If $Y>1$, then
      \[
       \frac{Y-Y^{1-\bar\delta}}{Y-1}
       \leq \bar\delta\frac{Y\log Y}{Y-1}\leq \bar\delta\Lambda_a,
      \]
      because $y\mapsto y\log y/(y-1)$ is increasing on $(1,\infty)$ and
      $Y\leq\kappa_a$.  The value at $Y=1$ follows by continuity.
      \end{proof}

On $\mathcal E_\theta^c$, one has $\bar\delta=0$ and
$\delta_i=0$, so \eqref{eq:conditioned-power-bound} is immediate.
Work on $\mathcal E_\theta$ and suppress the arguments $(t,x)$.  If
$S_i>0$, then $Y_i<1$ and Lemma~\ref{lem:power-delta-bound} gives
$
 \delta_i^2\leq\Lambda_a^2\bar\delta^2
 \leq\kappa_a\Lambda_a^2\bar\delta^2\lambda_{t,i}^\zeta,$
because $\lambda_{t,i}^\zeta\geq\kappa_a^{-1}$.  If $S_i=0$, then
$Y_i=1$, $\delta_i=\bar\delta$, and
$r_{t,i}^\zeta=\lambda_{t,i}^\zeta$ by
\eqref{eq:bridge-r-lambda}.  Hence
\[
 |r_{t,i}^\zeta\delta_i|^2
 =\bar\delta^2(\lambda_{t,i}^\zeta)^2
 \leq\kappa_a\bar\delta^2\lambda_{t,i}^\zeta
 \leq\kappa_a\Lambda_a^2\bar\delta^2\lambda_{t,i}^\zeta.
\]
Finally, if $S_i<0$, then $Y_i>1$ and
\[
 r_{t,i}^\zeta\delta_i
 =\lambda_{t,i}^\zeta
   \frac{1-Y_i^{-\bar\delta}}{Y_i-1}.
\]
Since $1-Y^{-\bar\delta}\leq\bar\delta\log Y
\leq\bar\delta(Y-1)$ for $Y>1$, one has
$r_{t,i}^\zeta\delta_i\leq \bar\delta\lambda_{t,i}^\zeta$, and hence
\[
 |r_{t,i}^\zeta\delta_i|^2
 \leq\kappa_a\bar\delta^2\lambda_{t,i}^\zeta
 \leq\kappa_a\Lambda_a^2\bar\delta^2\lambda_{t,i}^\zeta.
\]
This proves the stated uniform bound.

\bibliographystyle{plainnat}
\bibliography{references}

@misc{Chen2026,
  author        = {Chen, Yuansi},
  title         = {{Talagrand}'s convolution conjecture up to loglog via perturbed reverse heat},
  year          = {2025},
  howpublished  = {\emph{arXiv:2511.19374v2}},
  eprint        = {2511.19374},
  archiveprefix = {arXiv},
  primaryclass  = {math.PR}
}

@misc{Talagrand2016,
  author       = {Talagrand, Michel},
  title        = {Regularization from {$L^1$} by Convolution},
  year         = {2016},
  howpublished = {\url{https://michel.talagrand.net/prizes/convolution.pdf}}
}

@article{ball2013l1,
  title={{$L^1$}-smoothing for the Ornstein--Uhlenbeck semigroup},
  author={Ball, Keith and Barthe, Franck and Bednorz, Witold and Oleszkiewicz, Krzysztof and Wolff, Pawe{\l}},
  journal={Mathematika},
  volume={59},
  number={1},
  pages={160--168},
  year={2013},
  publisher={London Mathematical Society}
}

@article{EldanLee2018,
  author  = {Eldan, Ronen and Lee, James R.},
  title   = {Regularization under diffusion and anticoncentration of the information content},
  journal = {Duke Mathematical Journal},
  year    = {2018},
  volume  = {167},
  number  = {5},
  pages   = {969--993}
}

@article{Lehec2016,
  author  = {Lehec, Joseph},
  title   = {Regularization in {$L_1$} for the {Ornstein--Uhlenbeck} semigroup},
  journal = {Annales de la Facult{\'e} des sciences de Toulouse: Math{\'e}matiques},
  year    = {2016},
  volume  = {25},
  number  = {1},
  pages   = {191--204}
}

@book{ODonnell2014,
  author    = {O'Donnell, Ryan},
  title     = {Analysis of {Boolean} Functions},
  publisher = {Cambridge University Press},
  address   = {Cambridge},
  year      = {2014},
  isbn      = {978-1-107-03832-5}
}

@article{Talagrand1989,
  author  = {Talagrand, Michel},
  title   = {A conjecture on convolution operators, and a non-{Dunford--Pettis} operator on {$L^1$}},
  journal = {Israel Journal of Mathematics},
  year    = {1989},
  volume  = {68},
  number  = {1},
  pages   = {82--88}
}

@misc{XiangZhang2026,
  author        = {Xiang, Yanjin and Zhang, Zhihua},
  title         = {Layerwise terminal discrepancy in {Chen}'s reverse-heat coupling on the {Boolean} cube},
  year          = {2026},
  howpublished  = {\emph{arXiv:2606.04573v2}},
  eprint        = {2606.04573},
  archiveprefix = {arXiv},
  primaryclass  = {math.FA}
}

\end{document}